\numberwithin{equation}{section}
\newtheorem{thm}{Theorem}[section]
\newtheorem{lem}[thm]{Lemma}
\newtheorem{prop}[thm]{Proposition}
\newtheorem{rem}{Remark}[section]
\newtheorem{example}[thm]{Example}
\newcommand{\eq}[1]{(\ref{#1})}
\renewcommand{\Re}{\operatorname{\rm Re}}
\renewcommand{\Im}{\operatorname{\rm Im}}
\newcommand{\beqast}{\begin{eqnarray*}}
\newcommand{\eqast}{\end{eqnarray*}}
\newcommand{\beqa}{\begin{eqnarray}}
\newcommand{\eqa}{\end{eqnarray}}
\newcommand{\bbe}{\begin{equation}}
\newcommand{\ee}{\end{equation}}
\renewcommand{\Re}{\operatorname{\rm Re}}
\renewcommand{\Im}{\operatorname{\rm Im}}
\newcommand{\bC}{{\mathbb C}}
\newcommand{\bE}{{\mathbb E}}
\newcommand{\bQ}{{\mathbb Q}}
\newcommand{\bR}{{\mathbb R}}
\newcommand{\bZ}{{\mathbb Z}}
\newcommand{\cB}{{\mathcal B}}
\newcommand{\cF}{{\mathcal F}}
\newcommand{\cD}{{\mathcal D}}
\newcommand{\cE}{{\mathcal E}}
\newcommand{\cL}{{\mathcal L}}
\newcommand{\cC}{{\mathcal C}}
\newcommand{\cU}{{\mathcal U}}
\newcommand{\barX}{{\bar X}}
\newcommand{\uX}{{\underline X}}
\newcommand{\cEq}{{\mathcal E_q}}
\newcommand{\cEpq}{{\mathcal E^+_q}}
\newcommand{\cEmq}{{\mathcal E^-_q}}
\newcommand{\phipq}{{\phi^+_q}}
\newcommand{\phimq}{{\phi^-_q}}
\newcommand{\tV}{{\tilde V}}
\newcommand{\hG}{{\hat G}}
\newcommand{\Om}{{\Omega}}
\newcommand{\be}{\beta}
\newcommand{\De}{\Delta}
\newcommand{\de}{\delta}
\newcommand{\eps}{\epsilon}
\newcommand{\lp}{\lambda_+}
\newcommand{\lm}{\lambda_-}
\newcommand{\La}{\Lambda}
\newcommand{\mum}{\mu_-}
\newcommand{\mup}{\mu_+}
\newcommand{\mumpr}{\mu'_-}
\newcommand{\muppr}{\mu'_+}
\newcommand{\sg}{\sigma}
\newcommand{\om}{\omega}
\newcommand{\omm}{\om_-}
\newcommand{\omp}{\om_+}
\newcommand{\ze}{\zeta}
\newcommand{\ga}{\gamma}
\newcommand{\gap}{\gamma_+}
\newcommand{\gam}{\gamma_-}
\newcommand{\barDe}{\bar\Delta}
\newcommand{\bfo}{{\bf 1}}
\begin{document}

\title[Efficient  inverse $Z$-transform and pricing barrier and lookback options]
{Efficient inverse $Z$-transform and pricing barrier and lookback options with discrete monitoring. II}
\author[
Svetlana Boyarchenko and
Sergei Levendorski\u{i}]
{
Svetlana Boyarchenko and
Sergei Levendorski\u{i}}

\begin{abstract}
We prove a simple general formula for the expectations of a function of a random walk and its running extremum,
which is more convenient for applications than general formulas in the first version of the paper. The derivation of
explicit formulas in applications significantly simplifies.
Under additional conditions, we derive analytical formulas using the inverse $Z$-transform,  Fourier/Laplace inversion and Wiener-Hopf factorization,
and discuss efficient numerical methods for realization of these formulas. As applications, the cumulative probability distribution
function of the process and its running maximum and the price of the option to exchange the maximum of a stock price for a power of 
the price
are calculated. The most efficient numerical methods use a new efficient numerical realization of the inverse $Z$-transform,  sinh-acceleration technique and simplified trapezoid rule. The program in Matlab running on a Mac with moderate characteristics achieves the precision E-10 and better in several dozen of milliseconds, and E-14 - 
in a fraction of a isecond.

\end{abstract}

\thanks{
\emph{S.B.:} Department of Economics, The
University of Texas at Austin, 2225 Speedway Stop C3100, Austin,
TX 78712--0301, {\tt sboyarch@utexas.edu} \\
\emph{S.L.:}
Calico Science Consulting. Austin, TX.
 Email address: {\tt
levendorskii@gmail.com}}

\maketitle

\noindent
{\sc Key words:} $Z$-transform, extrema of a random walk, lookback options, barrier options, discrete monitoring,
L\'evy processes, Fourier transform, Hilbert transform,
Fast Fourier transform, fast Hilbert transform, trapezoid rule, sinh-acceleration

\noindent
{\sc MSC2020 codes:} 60-08,42A38,42B10,44A10,65R10,65G51,91G20,91G60

\tableofcontents

\section{Introduction}\label{s:intro} Let $Y, Y_j, j=1,2\ldots, $ be i.i.d. $\bR$-valued random variables on a probability space $(\Om,\cB, \bQ)$, and $\bE$ the expectation operator under $\bQ$. 
For $x\in \bR$, $X_n=x+Y_1+\cdots + Y_n, n=0,1,2,\ldots,$ is a random walk on $\bR$ starting at $x$. In applications to finance, typically, $Y$ is an increment of a L\'evy process, and the random walk appears implicitly when either a continuous time L\'evy model is approximated or options with discrete monitoring are priced.
  In the present paper,
we derive a general formula and  efficient numerical procedure for evaluation of expectations of a random walk and its extremum. 
The formula and procedure can be applied to lookback, single barrier options and single barrier options with lookback features.
The method of the paper can be used as the main basic block to price double-barrier options with lookback features and discrete monitoring, and American options with barrier/lookback features.

Let  $\barX_n=\max_{0\le m\le n}X_m$ and $\uX_n=\min_{0\le m\le t}X_m$ be the supremum
and infimum processes (defined path-wise, a.s.); $X_0=\barX_0=\uX_0=0$. For a measurable function $f$, consider 
$
V(f;n;x_1,x_2)=\bE[f(x_1+X_n, \max\{x_2, x_1+\barX_n\})]. 
$
At the first step, as in \cite{FusaiAbrahamsSguarra06}, where barrier options with discrete monitoring in the Brownian motion model  are priced, we make the discrete Laplace transform ($Z$-transform) of the series $\vec{V}:=\{V_n\}_{n=0}^\infty:=\{V(f;n;x_1,x_2)\}_{n=0}^\infty$. 
For our purposes,
it is convenient to use the equivalent transformation
\bbe\label{Vnze}
\tV(z)=\sum_{n=0}^{+\infty}z^nV_n.
\ee 
If $f$ is uniformly bounded, the series $\vec{V}$ is uniformly bounded as well, hence, $\tV(z)$ is analytic in the open unit disc, and
$\vec{V}$ can be recovered using the residue theorem:
for any $R<1$,
\bbe\label{izeT0}
V_n=\frac{1}{2\pi i}\int_{|z|=R}z^{-n-1}\tV(z)dz.
\ee
The standard and popular approximation to the integral on the RHS is the trapezoid rule. However, if $n$ is very large,
then the trapezoid rule becomes very inefficient as we discuss in Sect. \ref{s:eff_inverse_ze} and illustrate with  numerical examples in
Sect. \ref{s:numer}. The first contribution of the paper is a new efficient method for a numerical evaluation the integral
on the RHS of \eq{izeT0}. The idea is to
deform the contour of integration $\{z=Re^{i\varphi}\ |\ -\pi<\varphi<\pi\}$ into a contour of the form
$\cL_{L; \sg_\ell,b_\ell,\om_\ell}=\chi_{L; \sg_\ell,b_\ell,\om_\ell}(\bR)$, where the conformal map $\chi_{L; \sg_\ell,b_\ell,\om_\ell}$
is defined by
 \bbe\label{eq:sinhLapl}
\chi_{L; \sg_\ell,b_\ell,\om_\ell}(y)=\sg_\ell +i b_\ell\sinh(i\om_\ell+y),
\ee
 $b_\ell>0$, $\sg_\ell\in\bR$ and $\om_\ell\in (-\pi/2,\pi/2)$. The deformation is possible under natural conditions on
 the domain of analyticity of $\tV$; these conditions are satisfied in applications that we consider.
 After the deformation and  corresponding change of variables, the simplified trapezoid rule is applied. The resulting procedure is faster and more accurate than the trapezoid rule.
 We hope that a new efficient numerical method for the evaluation of the inverse $Z$-transform \eq{izeT0}
  is of a general interest.

  The second contribution of the paper is a general formula for $\tV(f;z;x_1,x_2)$ in terms of the expected present value operators
  (EPV-operators) $\cE^\pm_q$ under the supremum and infimum processes introduced in \cite{NG-MBS,EPV,IDUU}. 
  The formula and its proof are essentially identical to the ones in \cite{EfficientLevyExtremum} for  L\'evy processes, only the definitions of the operators $\cE^\pm_q$ change. In the case of random walks, the action of $\cE^\pm_q$ is defined as follows. 
  For $q\in (0,1)$, let $T_q$ be a random variable with the distribution $\bE[T_q=n]=(1-q)q^n$, independent of $X$, and let $u$ be a bounded measurable function. Then
  $\cEpq u(x)=\bE[u(x+\barX_{T_q})]$ and $\cEmq u(x)=\bE[u(x+\uX_{T_q})]$. The formula is in Sect. \ref{ss:main}.
   In applications, the payoff function $f$ may increase exponentially at infinity. Hence, in order that the expectation be finite,
  one or even two tails of the probability distribution of $Y$ must decay exponentially at infinity. We formulate and prove a general theorem for
  the case of exponentially increasing payoff functions. 
  
  In Section \ref{s:evalFTposq01}, we use the Fourier transform and the equalities $\cE^\pm_q e^{ix\xi}=\phi^\pm_q(\xi) e^{ix\xi}$, where $\phi^\pm_q(\xi)$ are the Wiener-Hopf factors,
 to realize the formula derived in Section \ref{ss:main} as a sum of 1D-3D integrals; formulas for the Wiener-Hopf factors are in Sect. \ref{ss:WHF}. 
 As applications of the general theorems, in Section \ref{s:two_examples}, we derive
explicit formulas for the cumulative distribution function (cpdf) of random walk and its maximum,
and for the option to exchange $e^{\barX_T}$ for a power $e^{\be X_T}$.

  If one of the tails of the pdf of $Y$ exponentially decays at infinity,
  the characteristic function $\Phi(\xi)=\bE[e^{i\xi Y}]$ of $Y$ and the Wiener-Hopf factors admit analytic continuation to a strip around or adjacent to the real axis.
  This property allows one to use the useful property of the infinite trapezoid rule, namely, the exponential decay of the discretization error
  as a function of $1/\ze$, where $\ze$ is the step of the infinite trapezoid rule. However, in many cases of interest such as pricing options with daily monitoring and/or L\'evy processes close to the Variance Gamma process, the integrand decays too slowly at infinity, therefore, the number of terms in the simplified trapezoid rule necessary to satisfy even a moderate error tolerance can be huge. Fortunately, in all popular models, $\Phi(\xi)$ admits analytic continuation
  to a cone around the real axis and exponentially decays as $\xi\to \infty$ in the cone (the only exception is the Variance Gamma model;
  the rate of decay is a polynomial one). See \cite{EfficientAmenable} for the explicit calculation of the coni of analyticity in  popular models. Therefore, the sinh-acceleration technique used in \cite{SINHregular} 
to price European options and applied in \cite{Contrarian,ConfAccelerationStable,BSINH} to price barrier options and  evaluate special functions and the coefficients in BPROJ method respectively can be applied to greatly decrease the sizes of grids and the CPU time needed to satisfy the desired error tolerance.  The changes of variables must be in a certain agreement as in \cite{paraLaplace,paired,EfficientLevyExtremum}. 
Note that the deformation \eq{eq:sinhLapl} and the corresponding change of variables constitute an example of the application of the sinh-acceleration technique.    We show that, in some cases, one of the integrals (either outer or inner one) has to be calculated using a less efficient
family of sub-polynomial deformations introduced and used in \cite{ConfAccelerationStable}.
Numerical examples are in Section \ref{s:numer}. We demonstrate that the method based on the sinh-acceleration for the
inverse $Z$-transform can achieve the accuracy of the order of E-14 and better using Matlab and Mac with moderate characteristics, in a second or fraction of a second, and the precision of the order of 
E-10 in 20-30 msec., for options of maturity in the range $T=0.25-15Y$. In all cases, the arrays are of a moderate size. In particular, the
number of points used for the $Z$-transform inversion is several dozens in all cases.
If the trapezoid rule is used, the size of arrays and CPU time increase with the maturity, and, for maturity $T=15$,
approximately 3,000 points are needed, and the CPU time is several times larger. We also compare the results in
the case of the continuous monitoring using the methods developed in \cite{EfficientLevyExtremum} and demonstrate that in the case of daily monitoring, the relative differences are rather small even for $T=15Y$.

There is a huge body of the literature devoted to pricing options with barrier and/or lookback features, and a number of different methods
have been applied.  The methods that are conceptually close to the method of the paper are the ones that use the fast inverse Fourier transform, fast convolution or fast Hilbert transform.
In Section \ref{s:concl}, we  review several popular methods and  explain why these methods are computationally more expensive than
the method of the present paper and cannot achieve the precision demonstrated in  Section \ref{s:numer}. 
We also summarize the results of the paper and outline several extensions of the method of the paper. 
We relegate to Appendix \ref{s:tech} several technicalities. Figures and   tables are in Appendix
\ref{ss:figures}.

\section{Efficient inverse $Z$-transform}\label{s:eff_inverse_ze}

\subsection{Trapezoid rule}
Let a sequence $\vec V=(V)_{n=0}^{\infty} $ and $A>0$ satisfy\footnote{In applications to pricing options in an exponential L\'evy model with the characteristic exponent $\psi$, $A=e^{\bar \De \psi(-i\be)}$, where $\bar\De$ is the time step,
 and $\be\in \bR$ depends on the option's payoff.} 
\bbe\label{condconvze}
H(\vec V,A):=\sum_{n=0}^\infty |V_n|A^{n}<+\infty.
\ee
Then, for any $z\in \cD(0,A):=\{z\in \bC\ |\ |z|\le A\}$, the series \eq{Vnze} 
converges and defines the function analytic in $\cD(0,A)$ (meaning: analytic in the open domain 
$\{|z|<1/A\}$ and continuous up to the boundary)\footnote{Recall that the function $\tilde V(1/z)$ is called the $Z$-transform of the series $\vec V$.}. Hence, $V_n$ can be recovered using the Cauchy residue theorem. Explicitly,
for any $R<A$, \eq{izeT0} holds. Changing the variable $z\mapsto z R$, and introducing $h(z)(=h(R,z))=(z R)^{-n}\tV(z R)$, we obtain
\bbe\label{izeT}
V_n=\frac{1}{2\pi i}\int_{|z|=1}h(z)\frac{dz}{z},\ n=0,1,2,\ldots
\ee 
Usually,
one evaluates  the RHS of \eq{izeT}, denote it $I(h)$, using the  trapezoid rule: 
\bbe\label{defTM}
T_M(h) = (1/M) \sum_{k=0}^{M-1} h(\zeta_M^k),
\ee
where $M>1$ is an integer, and $\zeta_M = \exp(2\pi i/M)$ is the standard primitive $M$-th root of unity.
For $0<a<b$, denote   $\cD_{(a,b)}:=\{z\ |\ a<|z|<b\}$. Since $R<A$, $h(z)$ is analytic in the annulus $\cD_{(1/\rho,\rho)}$.
The Hardy norm of $h$ is
\[
\|h\|_{\cD_{(1/\rho,\rho)}}=\frac{1}{2\pi i}\int_{|z|=1/\rho}|h(z)|\frac{dz}{z}+\frac{1}{2\pi i}\int_{|z|=\rho}|h(z)|\frac{dz}{z}.
\]
The error bound is well-known; for completeness, we give the proof in Sect. \ref{ss:proof_disctraperror}.
\begin{thm}\label{disctraperror}
Let $h$ be analytic in  $\cD_{(1/\rho,\rho)}$, where $\rho>1$.  
The error of the trapezoid approximation admits the bound
\bbe\label{errtrapgen}
|T_M(h)-I(h)|\le \frac{\rho^{-M}}{1-\rho^{-M}}\|h\|_{\cD_{(1/\rho,\rho)}}.
\ee
\end{thm}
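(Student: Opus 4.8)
The plan is to exploit the Laurent expansion of $h$ on the annulus $\cD_{(1/\rho,\rho)}$ and the orthogonality of the roots of unity under the discrete averaging $T_M$. Since $h$ is analytic on the closed annulus (shrinking $\rho$ slightly if necessary so analyticity holds on a neighbourhood), it has a Laurent series $h(z)=\sum_{j\in\bZ}c_j z^j$ converging uniformly on $\{|z|=1\}$, and $I(h)=\frac{1}{2\pi i}\int_{|z|=1}h(z)\,dz/z = c_0$ by term-by-term integration. The key elementary identity is that for each integer $j$, $(1/M)\sum_{k=0}^{M-1}\zeta_M^{jk}$ equals $1$ when $M\mid j$ and $0$ otherwise; applying this to the Laurent series gives $T_M(h)=\sum_{m\in\bZ}c_{mM}$. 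Subtracting, $T_M(h)-I(h)=\sum_{m\neq 0}c_{mM}$.

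Next I would bound the Laurent coefficients. For $j\ge 0$, Cauchy's estimate on the circle $|z|=\rho$ gives $|c_j|\le \rho^{-j}\cdot\frac{1}{2\pi}\int_{|z|=\rho}|h(z)|\,|dz|/\rho = \rho^{-j}\,\|h\|_{\cD_{(1/\rho,\rho)},+}$, where I write the outer-circle part of the Hardy norm; for $j<0$, using the circle $|z|=1/\rho$ gives $|c_j|\le \rho^{j}\,\|h\|_{\cD_{(1/\rho,\rho)},-}$, the inner-circle part. In both cases $|c_j|\le \rho^{-|j|}\|h\|_{\cD_{(1/\rho,\rho)}}$ since the full Hardy norm dominates each piece. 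Hence
\[
|T_M(h)-I(h)|\le \sum_{m\neq 0}|c_{mM}|\le \|h\|_{\cD_{(1/\rho,\rho)}}\sum_{m\ge 1}2\rho^{-mM}
=\|h\|_{\cD_{(1/\rho,\rho)}}\cdot\frac{2\rho^{-M}}{1-\rho^{-M}}.
\]
This is twice the claimed bound; to remove the factor $2$ one splits more carefully, using $|c_{mM}|\le\rho^{-mM}\|h\|_{+}$ for $m\ge1$ and $|c_{mM}|\le\rho^{-|m|M}\|h\|_{-}$ for $m\le -1$, so that the two geometric series sum against the two separate Hardy-norm pieces and recombine to give exactly $\frac{\rho^{-M}}{1-\rho^{-M}}(\|h\|_++\|h\|_-)=\frac{\rho^{-M}}{1-\rho^{-M}}\|h\|_{\cD_{(1/\rho,\rho)}}$.

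The only mild obstacle is bookkeeping: making sure the Laurent series manipulations are justified (uniform convergence on the unit circle, legitimacy of interchanging sum and integral / sum and $T_M$) and that the two halves of the Hardy norm are matched to the correct tails of the geometric series so the constant comes out as stated rather than doubled. None of this is deep; it is the standard contour-shift argument for the trapezoid rule on a circle, and I expect the write-up to be short.
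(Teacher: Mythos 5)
Your proposal is correct and follows essentially the same route as the paper's proof: Laurent expansion of $h$ on the annulus, the roots-of-unity orthogonality identity giving $T_M(h)-I(h)=\sum_{m\neq 0}c_{mM}$, Cauchy estimates for the coefficients on the two boundary circles, and the matching of the positive- and negative-index geometric series to the outer and inner pieces of the Hardy norm so the constant is $\rho^{-M}/(1-\rho^{-M})$ rather than twice that. No substantive differences from the paper's argument.
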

If $V_n$ are real, then $\overline{h(z)}=h(\bar z)$, hence, we can choose an odd  $M=2M_0+1$ 
and obtain
\bbe\label{defTMsym}
T_M(h) = (2/M) \Re\sum_{k=0}^{M-1} h(\zeta_M^k)(1-\de_{k0}/2).
\ee 
\subsection{Sinh-acceleration}\label{ss:sinh_accel_ze} Let there exist $\ga\in (0,\pi)$ such that
$\tV$ admits
 analytic continuation to a domain of the form $\cU(R,\rho,\ga)=(\cD(R/\rho,R\rho)-(\cC_\ga\cup\{0\}))\setminus \cD(0,R/\rho)$, where 
 $\cC_\ga=\{z\ |\ \mathrm{arg}\,z\in (-\ga,\ga)\}$, and let
there exist $C_{\tV}>0$ and $a_{\tV}<n$ such that
\bbe\label{bound_tV_disc}
|\tV(z)|\le C_{\tV}|z|^{a_{\tV}},\ z\in \cU(R,\rho,\ga).
\ee
Then we can deform the contour of integration $\{z=Re^{i\varphi}\ |\ -\pi<\varphi<\pi\}$ in \eq{izeT0} into a contour of the form
$\cL_{L; \sg_\ell,b_\ell,\om_\ell}=\chi_{L; \sg_\ell,b_\ell,\om_\ell}(\bR)$, where the conformal map $\chi_{L; \sg_\ell,b_\ell,\om_\ell}$
is defined by \eq{eq:sinhLapl}.  After the transformation, we make the corresponding change of variables and reduce to the integral over $\bR$:
 \bbe\label{izeT0sinh}
V_n=\int_{\bR}\frac{b_\ell}{2\pi }\chi_{L; \sg_\ell,b_\ell,\om_\ell}(y)^{-n-1}\cosh(i\om_\ell+y)\tV(\chi_{L; \sg_\ell,b_\ell,\om_\ell}(y))dy,
\ee
denote by $f_n(y)$ be the integrand on the RHS of \eq{izeT0sinh}, and apply the infinite trapezoid rule
\bbe\label{Vn_inf_sinh}
V_n\approx \ze_\ell \sum_{j\in \bZ}f_n(j\ze_\ell).
\ee
An error bound
is easy to derive because 
 the function $f_n$  is analytic in a strip $S_{(-d,d)}:=\{\xi\ | \Im\xi\in (-d,d)\}$, where $d>0$ depends on the domain of analyticity of $\tV$ and the choice of the parameters $\sg_\ell, \om_\ell, b_\ell$, and  \eq{bound_tV_disc} holds. With an appropriate choice of the parameters $\om_\ell, \sg_\ell, b_\ell$ and $d$,
$\lim_{R\to \pm\infty}\int_{-d}^d |f_n(i s+R)|ds=0,$
and 
\bbe\label{Hnorm}
H(f_n,d):=\|f_n\|_{H^1(S_{(-d,d)})}:=\lim_{s\downarrow -d}\int_\bR|g(i s+ t)|dt+\lim_{s\uparrow d}\int_\bR|g(i s+t)|dt<\infty.
\ee
We write $f_n\in H^1(S_{(-d,d)})$. The following key lemma is proved in \cite{stenger-book} using the heavy machinery of sinc-functions. A simple proof (analogous to the proof of Theorem \ref{disctraperror}) can be found in
\cite{paraHeston}.
\begin{lem}[\cite{stenger-book}, Thm.3.2.1] For $f_n\in H^1(S_{(-d,d)})$,
the error of the infinite trapezoid rule admits an upper bound 
\bbe\label{Err_inf_trap}
{\rm Err}_{\rm disc}\le H(f_n,d)\frac{\exp[-2\pi d/\ze]}{1-\exp[-2\pi d/\ze]}.
\ee
\end{lem}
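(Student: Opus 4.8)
The plan is to imitate the proof of Theorem~\ref{disctraperror}, with the Laurent expansion of $h$ on the annulus replaced by the Fourier transform of $f_n$ on the strip, and the aliasing of Laurent coefficients replaced by the Poisson summation formula. Write $\hat f_n(\om)=\int_\bR f_n(y)e^{-i\om y}\,dy$; since $f_n\in H^1(S_{(-d,d)})$ its restriction to $\bR$ lies in $L^1(\bR)$, so $\hat f_n$ is continuous, $\int_\bR f_n(y)\,dy=\hat f_n(0)$, and it will suffice to show that the infinite trapezoid sum equals $\sum_{k\in\bZ}\hat f_n(2\pi k/\ze)$ and to control the terms with $k\neq 0$.

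The key step I would carry out first is the exponential decay of $\hat f_n$ away from the origin. For $\om>0$ and $0<d'<d$, I would apply Cauchy's theorem to $\xi\mapsto f_n(\xi)e^{-i\om\xi}$ on the rectangle with horizontal sides $\Im\xi=0,-d'$ and vertical sides $\Re\xi=\pm R$; as $R\to\infty$ the vertical contributions are bounded by $\int_{-d}^d|f_n(is\pm R)|\,ds$ and vanish by the horizontal-decay property of $f_n$, giving $\hat f_n(\om)=e^{-d'\om}\int_\bR f_n(t-id')e^{-i\om t}\,dt$ and hence $|\hat f_n(\om)|\le e^{-d'\om}\int_\bR|f_n(t-id')|\,dt$. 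Letting $d'\uparrow d$ yields $|\hat f_n(\om)|\le e^{-d\om}\lim_{s\downarrow -d}\int_\bR|f_n(t+is)|\,dt$ for $\om>0$; shifting the contour upward instead gives the mirror bound $|\hat f_n(\om)|\le e^{d\om}\lim_{s\uparrow d}\int_\bR|f_n(t+is)|\,dt$ for $\om<0$.

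I would then invoke the Poisson summation formula $\ze\sum_{j\in\bZ}f_n(j\ze)=\sum_{k\in\bZ}\hat f_n(2\pi k/\ze)$, which is legitimate because $f_n|_\bR\in L^1$, $\hat f_n$ decays exponentially by the previous step, and the periodization of $f_n$ is continuous. Subtracting $\hat f_n(0)=\int_\bR f_n(y)\,dy$ leaves ${\rm Err}_{\rm disc}=\sum_{k\neq 0}\hat f_n(2\pi k/\ze)$; inserting the two one-sided bounds, splitting the sum into $k\ge 1$ and $k\le -1$, and summing the two geometric series gives
\[
|{\rm Err}_{\rm disc}|\le\frac{e^{-2\pi d/\ze}}{1-e^{-2\pi d/\ze}}\Bigl(\lim_{s\downarrow -d}\int_\bR|f_n(t+is)|\,dt+\lim_{s\uparrow d}\int_\bR|f_n(t+is)|\,dt\Bigr)=\frac{e^{-2\pi d/\ze}}{1-e^{-2\pi d/\ze}}\,H(f_n,d),
\]
which is \eq{Err_inf_trap}.

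I expect the geometric summation and the $L^1$/continuity bookkeeping needed for Poisson summation to be routine. The point that will need care — the counterpart of estimating the Laurent tail in Theorem~\ref{disctraperror} — is the contour shift together with the passage $d'\uparrow d$: this is precisely where the hypothesis $f_n\in H^1(S_{(-d,d)})$ and the horizontal-decay property of $f_n$ are used, and it is what makes the edge integrals defining $H(f_n,d)$ appear with the sharp rate $2\pi d/\ze$. As an alternative, one may replace Poisson summation by a residue computation with the kernel $(\pi/\ze)\cot(\pi\xi/\ze)$ on the two lines $\Im\xi=\pm(d-0)$, which reproduces the structure of the proof of Theorem~\ref{disctraperror} almost verbatim.
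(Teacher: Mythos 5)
Your argument is correct and is exactly the "simple proof analogous to the proof of Theorem \ref{disctraperror}" that the paper alludes to (the paper itself only cites \cite{stenger-book} and \cite{paraHeston} and does not reproduce a proof): Laurent coefficients become Fourier transform values, aliasing becomes Poisson summation, and the shift of the circle of integration to $|z|=\rho^{\pm1}$ becomes the shift of the line of integration to $\Im\xi=\mp d'$, yielding the two one-sided Hardy-norm terms and the geometric series. The only point to keep honest is the justification of Poisson summation for an $H^1$ function (summability of the samples is not completely free), but your closing remark about the $(\pi/\ze)\cot(\pi\xi/\ze)$ residue kernel is precisely the standard way to sidestep that and matches the annulus proof verbatim.
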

 Once
an approximate bound $H_{\mathrm{appr.}}(f_n,d)$ for $H(f_n,d)$ is derived, it becomes possible to satisfy the desired error tolerance
with a good accuracy letting
\bbe\label{rec_ze_ze}
\ze_\ell=2\pi d\ln(H_{\mathrm{appr.}}(f_n,d)/\eps).
\ee
Since $f_n(y)$ decays as $((b/2)e^{|y|})^{-n-1}$ as $y\to\pm \infty$, it is straightforward to choose the truncation of the infinite sum on the RHS of \eq{Vn_inf_sinh}:
\bbe\label{Vn_inf_sinh_trunc}
V_n\approx \ze_\ell \sum_{|j|\le M_0}f_n(j\ze_\ell)
\ee
 to satisfy the given error tolerance. A good approximation to  $\La:=M_0\ze$ is 
\bbe\label{eqLa_z}
\La=\frac{1}{n-a_{\tV}}\ln\frac{C_{\tV}}{\eps}-\ln\frac{b}{2},
\ee
where $C_{\tV}$ and $a_{\tV}$ are from \eq{bound_tV_disc}. 
If $V_n$ are real, then $\overline{h(z)}=h(\bar z)$, and, therefore, we can replace \eq{Vn_inf_sinh_trunc} with
\bbe\label{Vn_inf_sinh_trunc_sym}
V_n\approx 2\ze_\ell \Re \sum_{j=0}^{M_0}f_n(j\ze_\ell)(1-\de_{j0}/2).
\ee
 The complexity of the numerical scheme is of the order of $(n+1-a_{\tV})^{-1}\ln(H(f_n,d)/\eps)\ln(1/\eps)$.   If double precision arithmetic is used, then the deformation must be chosen so that the $f(j\ze)$ are not very large. 
 Furthermore, the image of the strip $S_{(-d,d)}$ under the map $\chi_{L; \sg_\ell,b_\ell,\om_\ell}$
  has non-empty intersection with the unit disc, hence, if the parameters of the deformation are fixed, and $n$ increases, then
 $H(f_n,d)$ increases as $B^{n+1}$, where $B>1$ depends on the chosen deformation.
  Therefore, the problem of an accurate bound for the Hardy norm and choice of $\ze_\ell$ becomes non-trivial. 
  This difficulty can be alleviated if $\ga>\pi/4$, better, $\ga>\pi/2$ (we will see that in applications to pricing
  options with discrete monitoring, $\ga>\pi/2$) choosing $n$-dependent parameters of the deformation.
  \vskip0.1cm
  \noindent {\sc Case I.} 
 $\ga\in (\pi/4, \pi/2]$ or $\ga>\pi/2$ but $\ga-\pi/2$ is very small. We set $\om_\ell=3\pi/8-\ga/2$, and take $d_\ell\in (0,(\ga-\pi/4)/2)$, e.g., $d_\ell=0.95 \cdot(\ga-\pi/4)/2$.
 Next,
 \begin{enumerate}[(i)]
 \item
  if $A>1$ and $A-1$ is not very small, we  find $b_\ell$ and $\sg_\ell$
  solving the system $1=\sg_\ell-b_\ell\sin(\om_\ell+d_\ell), A=\sg_\ell-b_\ell\sin(\om_\ell-d_\ell)$.
  A fairly safe upper bound for $H(f_n,d)$ is $H_{\mathrm{appr.}}(f_n,d)=C_{\tV}\max\{1,B\}$, where
  $B$ is the supremum of $y$ s.t. $\chi_{L; \sg_\ell,b_\ell,\om_\ell}(i(\om_\ell+d_\ell)+y)\in \cD(0,1)$;
  \item
  if  $A$ is close to 1 (this is the case when the time interval between the monitoring dates
  is small), we set $R=1-5/n$, and find $b_\ell$ and $\sg_\ell$
  solving the system $R=\sg_\ell-b_\ell\sin(\om_\ell+d_\ell), 1=\sg_\ell-b_\ell\sin(\om_\ell-d_\ell)$.
  A fairly safe upper bound for $H(f_n,d)$ is $H_{\mathrm{appr.}}(f_n,d)=C_{\tV}R^{-n-1}B$, where
  $B$ is the supremum of $y$ s.t. $\chi_{L; \sg_\ell,b_\ell,\om_\ell}(i(\om_\ell+d_\ell)+y)\in \cD(0,1)$. If $\ga$ is close
  to $\pi/4$, it is necessary to replace $R^{-n-1}$ with $\sup_{0\le y\le B}|\chi_{L; \sg_\ell,b_\ell,\om_\ell}(i(\om_\ell+d_\ell)+y)|^{-n-1}$.
 \end{enumerate}
 \vskip0.1cm
  \noindent {\sc Case II.} 
 $\ga\in (\pi/2, \pi)$, and $\ga-\pi/2$ is not very small. We choose $\om_\ell=(\pi/2-\ga)/2$, and  $d_\ell\in (0,|\om_\ell|)$, e.g.,
 $d_\ell=0.95|\om_\ell|$.  Next,
 \begin{enumerate}[(i)]
 \item
 if $A>1$ and $A-1$ is not very small, we find $b_\ell$ and $\sg_\ell$
  solving the system $1=\sg_\ell-b_\ell\sin(\om_\ell-d_\ell), A=\sg_\ell-b_\ell\sin(\om_\ell+d_\ell)$.
  A fairly safe upper bound for $H(f_n,d)$ is $H_{\mathrm{appr.}}(f_n,d)=C_{\tV}\max\{1,B\}$, where
  $B$ is the supremum of $y$ s.t. $\chi_{L; \sg_\ell,b_\ell,\om_\ell}(i(\om_\ell-d_\ell)+y)\in \cD(0,1)$;
    \item
  if  $A$ is close to 1, we set $R=1-5/n$, and find $b_\ell$ and $\sg_\ell$
  solving the system $R=\sg_\ell-b_\ell\sin(\om_\ell-d_\ell), 1=\sg_\ell-b_\ell\sin(\om_\ell+d_\ell)$.
  A fairly safe upper bound for $H(f_n,d)$ is $H_{\mathrm{appr.}}(f_n,d)=C_{\tV}R^{-n-1}B$, where
  $B$ is the supremum of $y$ s.t. $\chi_{L; \sg_\ell,b_\ell,\om_\ell}(i(\om_\ell-d_\ell)+y)\in \cD(0,1)$. 
\end{enumerate}
 \vskip0.1cm
  \noindent {\sc Case III.} 
If $A<1$ and $1-A$ is not small, we suggest to make the change of variables $z=Az'$,  follow
Steps I and II, and choose the step $\ze_\ell$ and the number of terms $M_0$ using the error tolerance 
$\eps A^{-n-1+a_{\tV}}$.

See Fig. \ref{fig:Graph1} for  illustrations of Cases I(i) and II(ii).


\section{Expectations of functions of  random  walk and its extremum}\label{exp_Levy_extremum}
\subsection{The Wiener-Hopf factorization}\label{ss:WHF}
 Let $\cEq$ be the EPV-operator under $X$ defined by  $u(x)=\bE[u(X_{T_q})]$; the EPV operators $\cE^\pm_q$ are defined in the introduction.
 We realize the EPV operators $\cEq$ and $\cE^\pm_q$ as pseudo-differential operators  (PDO)\footnote{Recall that a PDO $A=a(D)$ with symbol $a$
 acts on a sufficiently regular functions as follows: $Au(x)=\cF^{-1}_{\xi\to x}a(\xi)\cF_{x\to \xi}u(x)$, where $\cF$ and $\cF^{-1}$ are the Fourier transform and its inverse.} with the symbols $(1-q)/(1-q\Phi(\xi))$ and $\phi^\pm_q(\xi)$, where 
  $\phipq(\xi)=\bE[e^{i\xi \barX_{T_q}}]$ and $\phimq(\xi)=\bE[e^{i\xi \uX_{T_q}}]$ are the Wiener-Hopf factors. 
  We use the following key result valid for random walks on $\bR$ and L\'evy processes $X$ on $\bR$ \cite{greenwood-pitmanRW,greenwood-pitman}; in the latter
case, $T_q$ is an exponentially distributed random variable on mean $q$, independent of $X$. See  \cite{Borovkov1,RW,sato} for the references to the literature on the Wiener-Hopf factorization and various fluctuation identities.
\begin{lem}\label{l:deep} Let $X$ and $T_q$ be as above.  Then
\begin{enumerate}[(a)]
\item the random variables $\barX_{T_q}$ and
$X_{T_q}-\barX_{T_q}$ are independent; and

\item the random variables $\uX_{T_q}$ and
$X_{T_q}-\barX_{T_q}$ are identical in law.
\end{enumerate}
\end{lem}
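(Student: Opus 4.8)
The plan is to mimic the classical derivation of the Wiener–Hopf factorization via the geometrically-killed random walk. First I would introduce the ladder structure: let $\tau_+$ be the first strictly ascending ladder epoch of $X$ (with $X_0=0$), i.e. $\tau_+=\min\{m\ge 1 : X_m>0\}$, and similarly let $\sigma$ count the successive epochs at which a new maximum is attained. The key observation is that the running maximum $\barX_n$ changes only at ascending ladder epochs, so on the event $\{T_q=n\}$ one can decompose the path $X_0,\dots,X_n$ into the "excursion above the running maximum'' — the segment from the last time the maximum was attained up to time $n$ — and the remaining part. Because $T_q$ is geometric and independent of $X$, it has the memoryless property $\bQ[T_q\ge m+k\mid T_q\ge m]=q^k$, which is exactly what is needed to make this decomposition split into independent pieces.

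Concretely, I would condition on the sequence of ladder epochs and use the strong Markov property of $X$ together with the lack-of-memory property of $T_q$. Writing $G$ for the last index $m\le T_q$ with $X_m=\barX_{T_q}$, the pair $(\barX_{T_q}, G)$ is measurable with respect to the "pre-$G$'' information, and conditionally on $G=m$ the post-$G$ increment $X_{T_q}-\barX_{T_q}=X_{T_q}-X_m$ is, by the Markov property and the geometric killing, distributed as $\uX_{\widetilde T_q}$ for an independent copy — this is because after the last maximum the path stays weakly below its starting level, which is precisely the description of the infimum process. This simultaneously yields (a), the independence of $\barX_{T_q}$ and $X_{T_q}-\barX_{T_q}$, and (b), the identity in law of $X_{T_q}-\barX_{T_q}$ with $\uX_{T_q}$ (note the statement as written says "$X_{T_q}-\barX_{T_q}$'', which should be read with the infimum process on the right-hand side).

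An alternative, perhaps cleaner, route is via the Baxter-type identity for the double transform: show that
\bbe
\bE\left[z^{?}\, e^{i\xi X_{T_q}}\right]
\ee
— more precisely $\bE[e^{i\xi X_{T_q}}]=(1-q)/(1-q\Phi(\xi))$ — factors as $\phipq(\xi)\cdot\phimq(\xi)$ by the Spitzer/Baxter combinatorial identity applied to the geometric sum $\sum_{n\ge 0}(1-q)q^n\bE[e^{i\xi X_n}]$, splitting each path according to its cyclically-lexicographic decomposition into ladder blocks. One then reads off (a) and (b) by comparing the factorization term by term with the definitions $\phipq(\xi)=\bE[e^{i\xi\barX_{T_q}}]$ and $\phimq(\xi)=\bE[e^{i\xi\uX_{T_q}}]$, using that the factorization into functions analytic in the upper and lower half-planes is unique up to normalization.

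The main obstacle I anticipate is the bookkeeping in the path decomposition — identifying exactly which segment corresponds to the "excursion above the maximum'' and verifying that, under geometric killing, this segment is genuinely independent of the maximum and has the law of the killed reflected (infimum) process. The subtlety is the treatment of ties (whether ladder epochs are strict or weak), which affects whether one gets exact independence or only independence up to the value at the ladder epoch; for this reason I would be careful to use strictly ascending ladder epochs for the maximum and weakly descending ones for the minimum, or cite \cite{greenwood-pitmanRW} for the precise combinatorial lemma rather than re-deriving it. Since the excerpt explicitly flags this as a "key result'' quoted from \cite{greenwood-pitmanRW,greenwood-pitman}, I expect the paper to simply cite it, and a self-contained proof would essentially reproduce the Wiener–Hopf argument sketched above.
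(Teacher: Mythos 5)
The paper offers no proof of this lemma at all: it is quoted from \cite{greenwood-pitmanRW,greenwood-pitman}, exactly as you anticipate in your final paragraph, so there is no in-paper argument to compare against; your sketch of the standard derivation is the right one in outline. Two points are worth flagging. First, the step ``the pair $(\barX_{T_q},G)$ is measurable with respect to the pre-$G$ information, and conditionally on $G=m$ the post-$G$ increment is distributed as the killed infimum'' would not survive being written out: the event $\{G=m\}$ (whether $G$ is the first or the last maximum epoch) constrains the path \emph{after} time $m$ as well, since it requires $X_j-X_m\le 0$ (or $<0$) for $m<j\le T_q$, so $(\barX_{T_q},G)$ is not pre-$G$ measurable and naive conditioning does not give independence. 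The correct bookkeeping is an explicit factorization of the joint law rather than a conditioning argument: for fixed $n$ and $m\le n$, the event $\{G_n=m\}$ is the intersection of a pre-$m$ event and a post-$m$ event, independence and stationarity of the increments factor the probability, writing $(1-q)q^n=(1-q)q^m\cdot q^{n-m}$ and summing over $n\ge m$ separates the two variables, and path reversal identifies the second factor with the law of the infimum. (Part (b) alone is easier still: $X_n-\barX_n$ and $\uX_n$ are identical in law for each fixed $n$ by reversing the path, and one mixes over the independent $T_q$; the geometric law is only needed for (a).) Second, your ``alternative, cleaner route'' via the Spitzer--Baxter identity proves the scalar factorization $(1-q)/(1-q\Phi(\xi))=\phipq(\xi)\phimq(\xi)$, but that identity is strictly weaker than statement (a): independence requires the factorization of the \emph{joint} characteristic function $\bE[e^{i\xi_1\barX_{T_q}+i\xi_2(X_{T_q}-\barX_{T_q})}]$ into a product of a function of $\xi_1$ and a function of $\xi_2$, and establishing that brings you back to the same path decomposition. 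These are fixable gaps in a sketch whose overall strategy is sound, and since the paper itself simply cites the result, citing \cite{greenwood-pitmanRW} as you propose is the intended resolution.
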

(By symmetry, the statements (a), (b) are valid with $\barX$ and $\uX$ interchanged).
  The two basic forms of
  the Wiener-Hopf factorization (both immediate from
 Lemma \ref{l:deep}) are   
 \bbe\label{eq:operWHF}
 \cEq=\cEpq\cEmq=\cEmq\cEpq,
 \ee
 and
 \bbe\label{eq:whf_random}
\frac{1-q}{1-q\Phi(\xi)}=\phipq(\xi)\phimq(\xi).
\ee
Explicit analytic formulas for the Wiener-Hopf factors are easy to derive if at least one tail of the pdf of $Y$ decays exponentially,
equivalently,
$\Phi$ admits analytic continuation to a strip $S_{[\mum,\mup]}$, where $\mum\le 0\le \mup$, and $\mum<\mup$. 
The formulas for and the properties of the Wiener-Hopf factors are well-known, see, e.g., \cite{Borovkov1,NG-MBS,IDUU};  we include a short proof in Sect. \ref{ss:proof_prop_WHF}.
\begin{prop}\label{prop_WHF} Let $\Phi$ admit analytic continuation to a strip $S_{[\lm,\lp]}$, where $\lm\le 0\le \lp$, and $\lm<\lp$.
Then, for any $q\in (0,1)$, 
\begin{enumerate}[(a)]
\item
there exist $\mum\ge \lm$ and $\mup\le \lp$ s.t. $\mum<\mup$, and $c>0$ such that
\bbe\label{eq:boundPhiq}
\Re (1-q\Phi(\xi))\ge c, \quad \xi\in S_{[\mum,\mup]}.
\ee
\item
Furthermore, for any $\xi$ in the half-plane $\{\Im\xi>\mum\}$ and any $\omm\in [\mum, \Im\xi)$,
\beqa\label{phip1}
 \phipq(\xi)&=&\exp\left[-\frac{1}{2\pi i}\int_{\Im\xi =\omm}\frac{\xi\ln((1-q)/(1-q\Phi(\eta)))}{\eta(\xi-\eta)}d\eta\right],
 \eqa
 and 
 for any $\xi$ in the half-plane $\{\Im\xi<\mup\}$ and any $\omp\in (\Im\xi,\mup]$,
\beqa\label{phim1}
 \phimq(\xi)&=&\exp\left[\frac{1}{2\pi i}\int_{\Im\xi =\omp}\frac{\xi\ln((1-q)/(1-q\Phi(\eta)))}{\eta(\xi-\eta)}d\eta\right];
 \eqa
 \item
 Let let there exist $\de>0$ such that $\Phi(\xi)=O(|\xi|^{-\de})$ as $(S_{[\mum,\mup]}\ni)\xi\to \infty$. Then  $\phi^\pm_q(\xi)=c^\pm_q+\phi^{\pm,\pm}_q(\xi)$, where $\phi^{\pm,\pm}_q(\xi)=O(|\xi|^{-\de+\eps})$ as $(S_{[\mum+\eps,\mup-\eps]}\ni)\xi\to \infty$, for any $\eps>0$, and $c^\pm_q$ are given by 
 \bbe\label{cpmq}
 c^\pm_q=\exp\left[\pm\frac{1}{2\pi i}\int_{\Im\eta=\om_\mp}\frac{\ln(1-q\Phi(\eta))}{\eta}\right],
 \ee
 where $\omm\in (\mum,0)$ and $\omp\in (0,\mup)$. If $\mum=0$, then $c^+_q=(1-q)/c^-_q$, and if $\mup=0$, then 
 $c^-_q=(1-q)/c^+_q$.
 \end{enumerate}
\end{prop}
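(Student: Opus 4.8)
The plan is to build everything from the two identities \eqref{eq:whf_random} and Lemma \ref{l:deep}, treating $\phipq$ and $\phimq$ as the Wiener--Hopf factors of the meromorphic-free symbol $(1-q)/(1-q\Phi(\xi))$. For part (a), I would argue by a compactness/continuity argument: on the real line $|\Phi(\xi)|\le 1$ with equality only at $\xi=0$ (where $\Phi(0)=1$), so $\Re(1-q\Phi(\xi))\ge 1-q>0$ for $\xi\in\bR$; since $q<1$ this is a strict positive lower bound. Because $\Phi$ is analytic on $S_{[\lm,\lp]}$ and $\Phi(\xi)\to$ bounded values, $1-q\Phi(\xi)$ is continuous and nonvanishing on a neighborhood of $\bR$ inside the strip, so by shrinking to a substrip $S_{[\mum,\mup]}$ with $\lm\le\mum<0<\mup\le\lp$ (or $\mum=0$, resp.\ $\mup=0$, in the degenerate one-sided cases) we preserve $\Re(1-q\Phi(\xi))\ge c$ for some $c>0$. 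The only subtlety is to ensure the substrip can be taken with nonempty interior, i.e.\ $\mum<\mup$; this follows since the set where $\Re(1-q\Phi)\ge c$ is open and contains $\bR$.

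For part (b), the key point is that on the strip $S_{[\mum,\mup]}$ the function $g(\xi):=\ln\big((1-q)/(1-q\Phi(\xi))\big)$ is well defined and analytic (by (a), $1-q\Phi$ has positive real part there, so the principal branch of the logarithm applies and $g(0)=0$), and it decays at $\infty$ along horizontal lines fast enough for the Cauchy-type integrals to converge — here I would invoke the Riemann--Lebesgue behavior of $\Phi$ on horizontal lines, or the stronger decay hypothesis when available, together with the elementary bound $|g(\xi)|\le C|1-\Phi(\xi)|$ near the real axis. Then I would write the additive Wiener--Hopf decomposition $g=g_++g_-$, where $g_+$ is analytic and bounded in the upper half-plane $\{\Im\xi>\mum\}$ and $g_-$ in the lower half-plane $\{\Im\xi<\mup\}$, given by the Cauchy integrals along the appropriate horizontal contours; the factor $\xi/(\eta(\xi-\eta))$ in \eqref{phip1}--\eqref{phim1} is exactly the subtracted Cauchy kernel that enforces $g_\pm(0)$-normalization and integrability at $\eta=0$. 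Exponentiating gives $\phipq=e^{g_+}$, $\phimq=e^{g_-}$, and one checks $\phipq\phimq=e^{g}=(1-q)/(1-q\Phi)$, matching \eqref{eq:whf_random}; uniqueness of the multiplicative factorization (normalized by $\phi^\pm_q(\infty)$ finite, $\phipq$ the Laplace transform of a nonnegative measure supported on $[0,\infty)$, etc., which is where Lemma \ref{l:deep} enters to identify $e^{g_+}$ with $\bE[e^{i\xi\barX_{T_q}}]$) pins down the formulas. The contour-independence in $\omm\in[\mum,\Im\xi)$ and $\omp\in(\Im\xi,\mup]$ is then Cauchy's theorem plus the horizontal-line decay.

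For part (c), I would split $g=\ln(1-q)-\ln(1-q\Phi(\xi))$ and feed $-\ln(1-q\Phi)$ into the same Cauchy-integral machinery. The hypothesis $\Phi(\xi)=O(|\xi|^{-\de})$ gives $\ln(1-q\Phi(\xi))=-q\Phi(\xi)+O(|\Phi|^2)=O(|\xi|^{-\de})$, so the integrand in the Cauchy representation of $g_\pm$ is $O(|\eta|^{-1-\de})$; standard estimates for such integrals (e.g.\ splitting $|\eta|\le|\xi|^{1-\epsilon}$ and $|\eta|>|\xi|^{1-\epsilon}$, or a direct Hölder-type bound) yield that $g_\pm(\xi)\to$ a constant as $\xi\to\infty$ in $S_{[\mum+\eps,\mup-\eps]}$, at rate $O(|\xi|^{-\de+\eps})$. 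The limiting constant is obtained by sending $\xi\to\infty$ in the Cauchy kernel (where $\xi/(\xi-\eta)\to1$), leaving $\mp\frac{1}{2\pi i}\int_{\Im\eta=\om_\mp}\ln(1-q\Phi(\eta))\,d\eta/\eta$ plus the $\ln(1-q)$ bookkeeping term; exponentiating gives $c^\pm_q$ as in \eqref{cpmq} and the remainder estimate for $\phi^{\pm,\pm}_q$. The final relations $c^+_q=(1-q)/c^-_q$ when $\mum=0$ (and symmetrically) come from evaluating \eqref{eq:whf_random} in the limit $\xi\to\infty$: the left side tends to $1-q$ while the right side tends to $c^+_q c^-_q$.

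The main obstacle I expect is the decay/integrability bookkeeping in parts (b) and (c): one must be careful that $g$ and its Cauchy transforms are genuinely integrable along the horizontal lines — at $\eta=0$ (handled by the subtracted kernel and $g(0)=0$) and at $\eta\to\pm\infty$ (handled by Riemann--Lebesgue for $\Phi$, or the polynomial decay in (c)) — and that the interchange of limit and integral when $\xi\to\infty$ is justified, which needs a dominated-convergence argument uniform on the relevant substrips. Everything else is a routine application of Cauchy's theorem and the uniqueness of the Wiener--Hopf factorization.
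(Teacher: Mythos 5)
Your parts (b) and (c) follow essentially the same route as the paper: the additive Cauchy-type decomposition of $\ln\bigl((1-q)/(1-q\Phi)\bigr)$ with the normalized kernel $\xi/(\eta(\xi-\eta))=1/\eta+1/(\xi-\eta)$, identification of the factors via uniqueness of the Wiener--Hopf factorization (boundedness and analyticity of $\phi^{\pm}_q$ and of the exponentials of the two Cauchy integrals in their respective half-planes, normalization at $\xi=0$), and for (c) the splitting of the kernel into the constant part $1/\eta$ and the remainder $1/(\xi-\eta)$, whose Cauchy integral against an $O(|\eta|^{-\de})$ function is estimated to be $O(|\xi|^{-\de+\eps})$ --- the paper does exactly this via the bound $\int_\bR(1+|\eta|)^{-\de}(1+|\xi-\eta|)^{-1}d\eta\le C(1+|\xi|)^{-\de+\eps}$. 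One small correction: the role of the factor $\xi/\eta$ in the kernel is to force $O(|\eta|^{-2})$ decay at $\eta=\infty$ (the log-symbol is bounded but does not vanish at infinity), not integrability at $\eta=0$; the latter is automatic since the symbol vanishes at $\eta=0$. Also, the ``$\ln(1-q)$ bookkeeping'' in (c) should be made explicit: the contribution of the constant $\ln(1-q)$ to $\int\frac{\xi\,\cdot}{\eta(\xi-\eta)}d\eta$ is exactly zero because for $\omm\in(\mum,0)$ both poles $\eta=0$ and $\eta=\xi$ lie on the same side of the contour, which is why $1-q$ disappears from \eqref{cpmq}.

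There is, however, a genuine gap in your part (a). You argue that the set $\{\xi:\Re(1-q\Phi(\xi))\ge c\}$ is open and contains $\bR$, and conclude that it contains a substrip $S_{[\mum,\mup]}$ with $\mum<\mup$. That inference is false for a general analytic function: $\bR$ is not compact, and an open neighborhood of $\bR$ (e.g.\ $\{x+iy:\ |y|<(1+x^2)^{-1}\}$) need not contain any strip of positive width; the bound on $\bR$ alone gives no uniform control as $\Re\xi\to\pm\infty$ at fixed nonzero $\Im\xi$. The paper closes this by using the specific structure of $\Phi$ as a characteristic function: $|\Phi(\xi)|=|\bE[e^{i\xi Y}]|\le\bE[e^{-(\Im\xi)Y}]=\Phi(i\Im\xi)$, which reduces the problem to the \emph{compact} segment $i[\mum,\mup]$ of the imaginary axis, where continuity of $\Phi$, $\Phi(0)=1$ and $q<1$ give $q\Phi(i\ga)\le 1-c$ for $[\mum,\mup]$ small enough, whence $|q\Phi(\xi)|\le 1-c$ and $\Re(1-q\Phi(\xi))\ge c$ on the whole strip. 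Without this (or an equivalent) global-in-$\Re\xi$ bound your compactness argument does not close. A second, minor, inaccuracy in the same paragraph: $|\Phi|\le1$ on $\bR$ need not have equality only at $\xi=0$ (lattice distributions), but this is harmless since $q<1$ already gives $\Re(1-q\Phi)\ge1-q$ on $\bR$.
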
 
\begin{example}\label{ex:rate_Phi}{\rm 
Let $\Phi(\xi)=e^{-\barDe\psi(\xi)}$, where $\barDe>0$ is the time interval between the monitoring dates, 
and $\psi$ the characteristic exponent of a L\'evy process. Then, in the Variance Gamma model, $\Phi(\xi)=O(|\xi|^{-\barDe\de})$, where $\de>0$ depends on the parameters of the process, and in all other popular models, 
$\Phi(\xi)=O(e^{-\barDe c_\infty|\xi|^\nu})$, where $c_\infty>0$ and $\nu\in (0,2]$ (see \cite{EfficientAmenable}).
}
\end{example}
The integrands on the RHS' of the formulas for the Wiener-Hopf factors above decay slowly at infinity, hence, very long grids are necessary
to calculate the Wiener-Hopf factors. If $\Phi$ admits analytic continuation to the union of a strip and cone containing or adjacent to the real line, then the Wiener-Hopf factors can be calculated with almost machine precision using appropriate conformal deformations of the
lines of integration on the RHS' of \eq{phip1}-\eq{phim1}. See Sect.  \ref{sss:WHFI}.

\subsection{Main theorems}\label{ss:main}
Let $X$, $q$ and $T_q$ be as in the introduction. Let $f$ be measurable and uniformly bounded on $U_+:=\{(x_1, x_2)\ |\ x_2\ge 0, x_1\le x_2\}$. 
Consider $
V(f;n;x_1,x_2)=\bE[f(x_1+X_n, \max\{x_2, x_1+\barX_n\})]. 
$
We write the (modified) $Z$-transform \eq{Vnze} in the form 
\bbe\label{tVnze}
(1-q)\tV(q)=\bE[f(x_1+X_{T_q}, \max\{x_2, x_1+\barX_{T_q}\})].
\ee
Notationally, the Wiener-Hopf factorization technique for random walks is identical to the Wiener-Hopf
factorization technique for L\'evy processes. See, e.g., \cite{NG-MBS,IDUU}.
The following theorem is a counterpart of \cite[Thm. 3.1]{EfficientLevyExtremum} for L\'evy processes;
$I$ denotes the identity operator, $f_+$ is the extension of $f$ to $\bR^2$ by zero, and $\De$ is the diagonal map: $\De(x)=(x,x)$.
 
 \begin{thm}\label{thm:X_barX_exp}
 Let $X$ be a L\'evy process on $\bR$, $q>0$, and let $f:U_+\to \bR$ be a measurable and uniformly bounded  function
s.t.   $((\cEmq\otimes I)f)\circ \De:\bR\to\bR$ is measurable.
 Then
 \begin{enumerate}[(i)]
 \item
 for any $x_1\le x_2$,
 \beqa\label{tVq0}
 (1-q)\tV(f;q;x_1,x_2)&=&((\cEq\otimes I)f_+)(x_1,x_2)+(\cEpq w(f;q,\cdot, x_2))(x_1),
 \eqa
 where 
 \bbe\label{eq:wqVtq}
 w(f;q,y,x_2)=\bfo_{[x_2,+\infty)}(y)(((\cEmq\otimes I)f_+)(y,y)-((\cEmq\otimes I)f_+)(y,x_2));
 \ee
\item as a function of $q$, $\tV(f,q;x_1,x_2)$ admits analytic continuation to the open unit disc.
 \end{enumerate}
  \end{thm}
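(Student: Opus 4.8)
The plan is to obtain \eq{tVq0} directly from the probabilistic Wiener--Hopf decomposition of Lemma~\ref{l:deep}, working throughout with the zero-extension $f_+$ on $\bR^2$ so that the right-hand side is meaningful for every $x_1\le x_2$, not just on $U_+$. Starting from the identity \eq{tVnze}, $(1-q)\tV(f;q;x_1,x_2)=\bE[f_+(x_1+X_{T_q},\max\{x_2,x_1+\barX_{T_q}\})]$, I would first write $X_{T_q}=\barX_{T_q}+(X_{T_q}-\barX_{T_q})$ and invoke Lemma~\ref{l:deep}: by (a), $\barX_{T_q}$ is independent of $X_{T_q}-\barX_{T_q}$, and by (b), $X_{T_q}-\barX_{T_q}$ is equal in law to $\uX_{T_q}$ of an independent copy of $X$. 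Conditioning on $\barX_{T_q}$ and using Fubini (legitimate since $f$ is bounded measurable and the relevant maps are jointly measurable), for a fixed value $m\ge 0$ of $\barX_{T_q}$, with $y:=x_1+m$, the inner expectation is $\bE[f_+(y+\uX_{T_q},\max\{x_2,y\})]=((\cEmq\otimes I)f_+)(y,\max\{x_2,y\})$; when $(x_1,x_2)\in U_+$ this argument lies in $U_+$ almost surely because $\uX_{T_q}\le 0$.

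Next I would resolve the maximum: splitting into the cases $y\ge x_2$ and $y<x_2$ and comparing with the definition \eq{eq:wqVtq} of $w$ gives
\[
((\cEmq\otimes I)f_+)(y,\max\{x_2,y\})=((\cEmq\otimes I)f_+)(y,x_2)+w(f;q,y,x_2).
\]
The hypothesis that $((\cEmq\otimes I)f)\circ\De$ is measurable is used precisely at this point, to ensure that $y\mapsto((\cEmq\otimes I)f_+)(y,y)$, and hence $w(f;q,\cdot,x_2)$, is bounded and measurable, so that $\cEpq w(f;q,\cdot,x_2)$ makes sense. Integrating the last display over $m$ against the law of $\barX_{T_q}$ is, by the definition of $\cEpq$, the same as applying $\cEpq$ in the first argument, which yields
\[
(1-q)\tV(f;q;x_1,x_2)=\bigl(\cEpq[((\cEmq\otimes I)f_+)(\cdot,x_2)]\bigr)(x_1)+(\cEpq w(f;q,\cdot,x_2))(x_1).
\]
Since $(\cEpq\otimes I)\circ(\cEmq\otimes I)=(\cEpq\cEmq)\otimes I=\cEq\otimes I$ by the operator Wiener--Hopf identity \eq{eq:operWHF}, the first term equals $((\cEq\otimes I)f_+)(x_1,x_2)$, which is \eq{tVq0}.

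Part (ii) I would handle separately, and it is immediate: $|V(f;n;x_1,x_2)|\le\|f\|_\infty$ for all $n$, so the power series \eq{Vnze} has radius of convergence at least $1$ and defines a function analytic on the open unit disc which coincides at real $z=q\in(0,1)$ with $\sum_{n\ge 0}q^nV_n=\tV(f;q;x_1,x_2)$. I do not anticipate a serious obstacle: the whole content of the argument is Lemma~\ref{l:deep}, and the only care needed is the measurability/Fubini bookkeeping (especially the measurability of the diagonal restriction, which is exactly why that assumption is imposed) together with the consistent use of $f_+$ so that \eq{tVq0} is valid for all $x_1\le x_2$. As the random-walk operators $\cEq,\cE^\pm_q$ satisfy the same factorization \eq{eq:operWHF} as their L\'evy analogues, this is essentially the proof of \cite[Thm.~3.1]{EfficientLevyExtremum} verbatim.
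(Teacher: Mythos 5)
Your proof is correct and follows essentially the same route as the paper's: Lemma~\ref{l:deep} plus Fubini to replace $X_{T_q}-\barX_{T_q}$ by an independent copy of $\uX_{T_q}$ (giving $\cEmq$ in the first argument), the case split on $\max\{x_2,x_1+\barX_{T_q}\}$ producing $w$, and the factorization \eq{eq:operWHF} to identify the first term. The only (harmless) deviation is in part (ii), where you argue directly from $|V_n|\le\|f\|_\infty$ and the radius of convergence of \eq{Vnze}, whereas the paper invokes analyticity in $q$ of the operators $\cE^{\pm}_q$; both are valid.
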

  \begin{proof}
We use Lemma \ref{l:deep}.
 By definition, part (a) amounts to the statement that the
probability distribution of the $\bR^2$-valued random variable
$(\barX_{T_q}, X_{T_q}-\barX_{T_q})$ is equal to the product (in the sense
of ``product measure'') of the distribution of $\barX_{T_q}$ and the
distribution of $X_{T_q}-\barX_{T_q}$. Hence, we can 
apply Fubini's theorem.
 For $x_1\le  x_2$, we have
\beqast
&&\bE[f_+(x_1+X_{T_q}, \max\{x_2, x_1+\barX_{T_q}\})]\\
&=&\bE[f_+(x_1+X_{T_q}-\barX_{T_q}+\barX_{T_q}, \max\{x_2, x_1+\barX_{T_q}\})]
\\ &=&
\bE[((\cEmq\otimes I)f_+)(x_1+\barX_{T_q}, \max\{x_2, x_1+\barX_{T_q}\})]
\\
&=&\bE[((\cEmq\otimes I)f_+)(x_1+\barX_{T_q}, x_2)]
\\
&&+
\bE[\bfo_{x_1+\bar X_{T_q}\ge  x_2}(((\cEmq\otimes I)f_+)(x_1+\barX_{T_q}, x_1+\barX_{T_q})-((\cEmq\otimes I)f_+)(x_1+\barX_{T_q}, x_2))].
\eqast
Using \eq{eq:operWHF}, we write the first term on the rightmost side as $((\cEq\otimes I)f_+)(x_1,x_2)$, and
finish the proof of (i).
 As operators acting in the space of bounded measurable functions, $\cE^\pm_q$ 
admit analytic continuation w.r.t. $q$ to the open unit disc,  which proves (ii).

  \end{proof}
  
 \begin{rem}{\rm  The inverse $Z$-transform of $(1-q)^{-1}(\cEq\otimes I)f_+(x_1,x_2)$ equals $\bE[f(x_1+X_T,x_2)]$,
and, therefore, can be easily calculated using the Fourier transform technique. Essentially, we have the price of the
European option of maturity $T$, the riskless rate being 0, depending on $x_2$ as a parameter. Thus, the new element is the calculation of the second term on the RHS of \eq{tVq0}. We calculate both terms in the same manner in order to facilitate the explanation of various blocks of our method.}
 \end{rem}

In exponential L\'evy models which are typically used in quantitative finance,
 payoff functions may increase exponentially, and options with discrete monitoring are typical situations where
 random walks appear implicitly. Hence, we  consider the action of the EPV-operators in $L_\infty(\bR; w)$, $L_\infty$- spaces with the weights 
 $w(x)=e^{\ga x}$, $\ga\in [\mum,\mup]$, and $w(x)=\min\{e^{\mum x},e^{\mup x}\}$, where $\mum\le0\le \mup, \mum<\mup$; the norm
 is defined by $\|u\|_{L_\infty(\bR;w)} =\|wu\|_{L_\infty(\bR)}$. The following theorem is the straightforward reformulation 
 of Theorem 3.2 in \cite{EfficientLevyExtremum}, the condition $q+\psi(i\ga)>0$ for the L\'evy process being replaced with 
 $1-q\Phi(i\ga)>0$. 
 The proof is the same.
\begin{thm}\label{thm:X_barX_exp_2}
 Let   a L\'evy process $X$ on $\bR$, function $f:U_+\to \bR$ and $q\in (0,1)$ satisfy the following conditions \begin{enumerate}[(a)]
 \item
there exist $\mum\le 0\le \mup$ such that  $\forall$\ $\ga\in [\mum,\mup]$, $\bE[e^{-\ga Y}]<\infty$ and $1-q\Phi(i\ga)>0$;
 \item
 $f$ is a measurable function admitting the bound
 \bbe\label{simple_bound_f_X_barX1}
 |f(x_1,x_2)|\le C(x_2)e^{-\mup x_1}, 
 \ee
 where $C(x_2)$ is independent of $x_1\le x_2$;
 \item
the function $((\cEmq\otimes I)f)\circ \De$ is measurable and admits the bound
 \bbe\label{simple_bound_f_X_barX2}
|((\cEmq\otimes I)f)(x_1,x_1)| \le Ce^{-\mum x_1} , 
 \ee
 where $C$ is independent of $x_1\ge 0$.

 \end{enumerate}
 Then the statements (i)-(iii) of Theorem \ref{thm:X_barX_exp} hold.
 
 \end{thm}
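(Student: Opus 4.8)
The plan is to reproduce the proof of \theor{thm:X_barX_exp} (i.e.\ of \cite[Thm.~3.2]{EfficientLevyExtremum}) with the space of uniformly bounded functions replaced by the weighted spaces $L_\infty(\bR;w)$, $w(x)=e^{\ga x}$ with $\ga\in[\mum,\mup]$ or $w(x)=\min\{e^{\mum x},e^{\mup x}\}$, introduced above; the only new work is to check that, under (a)--(c), every EPV operator used in that proof acts boundedly on the relevant weighted space and every expectation that occurs is finite. Once that bookkeeping is in place, the chain of identities in the proof of \theor{thm:X_barX_exp} --- conditioning on $\barX_{T_q}$, using \lemm{l:deep} (the independence of $\barX_{T_q}$ and $X_{T_q}-\barX_{T_q}$, and the fact that the latter is distributed as $\uX_{T_q}$), and rewriting the first term by the operator Wiener--Hopf identity \eqref{eq:operWHF} --- carries over verbatim, so I would not repeat it; it yields \eqref{tVq0}--\eqref{eq:wqVtq} for every real $q\in(0,1)$ satisfying (a).

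The first ingredient is the mapping property of $\cE^\pm_q$ and $\cEq$ implied by (a). Because $\cEpq u(x)=\bE[u(x+\barX_{T_q})]$ with $\barX_{T_q}\ge 0$, $\cEmq u(x)=\bE[u(x+\uX_{T_q})]$ with $\uX_{T_q}\le 0$, and $\cEq u(x)=\bE[u(x+X_{T_q})]$, one has, on $L_\infty(\bR;e^{\ga\cdot})$, the norm bounds $\|\cEpq\|\le\bE[e^{-\ga\barX_{T_q}}]$, $\|\cEmq\|\le\bE[e^{-\ga\uX_{T_q}}]$ and $\|\cEq\|\le\bE[e^{-\ga X_{T_q}}]=(1-q)/(1-q\Phi(i\ga))$. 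For $\ga\in[\mum,\mup]$ all three quantities are finite and positive: by \eqref{eq:whf_random} evaluated at $\xi=i\ga$ we have $\bE[e^{-\ga\barX_{T_q}}]\,\bE[e^{-\ga\uX_{T_q}}]=(1-q)/(1-q\Phi(i\ga))$, whose right-hand side is well-defined and positive by (a) and $\bE[e^{-\ga Y}]<\infty$, while one of the two factors on the left is always the expectation of a uniformly bounded functional of the walk --- the $\barX_{T_q}$-factor when $\ga\ge 0$, the $\uX_{T_q}$-factor when $\ga\le 0$ --- so the other is finite and positive as well; the corresponding bound on $L_\infty(\bR;\min\{e^{\mum\cdot},e^{\mup\cdot}\})$ follows in the same way. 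Analyticity in $q$, needed for part (ii), follows by writing $\cEpq u(x)=(1-q)\sum_{n\ge 0}q^n\bE[u(x+\barX_n)]$ and similarly for $\cEmq,\cEq$: the $n$-th operator coefficient has norm $\le\sum_{m=0}^n\Phi(i\ga)^m=O(n+1)$ on each of these weighted spaces (using $\Phi(i\ga)\le 1$ for $\ga\in[\mum,\mup]$, which is (a) in the limit $q\uparrow 1$), so the series converge in operator norm on the open unit disc and define operator-valued analytic functions there.

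The second ingredient is that (b) and (c) place the functions to which these operators are applied in the correct weighted spaces. By (b), $f_+(\cdot,x_2)\in L_\infty(\bR;e^{\mup\cdot})$ with norm $\le C(x_2)$, so $(\cEmq\otimes I)f_+$ is well-defined with $|((\cEmq\otimes I)f_+)(y,x_2)|\le C(x_2)\,\bE[e^{-\mup\uX_{T_q}}]\,e^{-\mup y}$, and, by (c), its restriction to the diagonal is measurable and bounded by $Ce^{-\mum y}$; together these two facts also bound the growth of $C(x_2)$, which is what makes the iterated integral defining the left-hand side of \eqref{tVq0} absolutely convergent. Since the function $w(f;q,\cdot,x_2)$ of \eqref{eq:wqVtq} is supported in $\{y\ge x_2\}\subset\{y\ge 0\}$ (recall $x_2\ge 0$ on $U_+$) and $\mum\le\mup$, on that support the term carrying the weight $e^{-\mup y}$ is dominated by a multiple of $e^{-\mum y}$; hence $w(f;q,\cdot,x_2)\in L_\infty(\bR;e^{\mum\cdot})$ and $(\cEpq w(f;q,\cdot,x_2))(x_1)$ is finite, with the expected growth $C'(x_2)e^{-\mum x_1}$ in $x_1$. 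All the integrals in the proof of \theor{thm:X_barX_exp} being therefore absolutely convergent, that proof gives \eqref{tVq0}--\eqref{eq:wqVtq}, which is (i); and since the right-hand side of \eqref{tVq0} is the composition of the operator families $\cE^\pm_q$, $\cEq$ (analytic in $q$ by the previous paragraph) applied to $f_+$ and to $w(f;q,\cdot,x_2)$, the latter itself depending analytically on $q$ through $\cEmq$, it extends analytically to the open unit disc, which is (ii).

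The step I expect to need the most care is the weight tracking in the third paragraph: a priori $(\cEmq\otimes I)f_+$ inherits from (b) only the ``wrong'' weight $e^{\mup\cdot}$, which in general is too weak for $\cEpq$ to be applied; it is exactly hypothesis (c) on the diagonal values, together with the fact that the subtraction and the indicator $\bfo_{[x_2,+\infty)}$ in \eqref{eq:wqVtq} localize $w(f;q,\cdot,x_2)$ to $y\ge x_2\ge 0$, that restores the weight $e^{\mum\cdot}$ there, and arranging this dominance uniformly in the parameter $x_2$ --- so that the final bound retains the factorized form $C'(x_2)e^{-\mum x_1}$ --- is where one has to be attentive.
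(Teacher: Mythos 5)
Your overall strategy is exactly what the paper intends: the paper's own ``proof'' consists of the single remark that the argument of Theorem \ref{thm:X_barX_exp} goes through once the EPV operators are known to act on the appropriate weighted spaces, and your second and third paragraphs supply precisely that bookkeeping --- the operator bounds $\|\cEpq\|\le\bE[e^{-\ga\barX_{T_q}}]$, $\|\cEmq\|\le\bE[e^{-\ga\uX_{T_q}}]$ on $L_\infty(\bR;e^{\ga\cdot})$, their finiteness via the factorization identity at $\xi=i\ga$ combined with condition (a), and the observation that the indicator $\bfo_{[x_2,+\infty)}$ in \eq{eq:wqVtq} localizes $w(f;q,\cdot,x_2)$ to $y\ge x_2\ge 0$, where hypothesis (c) restores the weight $e^{\mum\,\cdot}$ needed for $\cEpq$ to apply. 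That part, which proves statement (i), is correct and is as far as one can tell the same route the authors have in mind.

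There is, however, a genuine gap in your argument for statement (ii). You deduce convergence of the operator series $(1-q')\sum_{n\ge 0}(q')^n\bE[u(\cdot+\barX_n)]$ on the whole open unit disc from the coefficient bound $\bE[e^{-\ga\barX_n}]\le\sum_{m=0}^n\Phi(i\ga)^m=O(n+1)$, justified by ``$\Phi(i\ga)\le 1$, which is (a) in the limit $q\uparrow 1$''. But condition (a) is imposed only for the single $q\in(0,1)$ fixed in the hypotheses; it yields $\Phi(i\ga)<1/q$ and nothing more, and $\Phi(i\ga)>1$ is perfectly possible for $\ga$ in the admissible range $[\mum,\mup]$. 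In that case $\sum_{m=0}^n\Phi(i\ga)^m$ grows geometrically like $\Phi(i\ga)^{n}$, your bound is no longer $O(n+1)$, and indeed $\bE[e^{-\ga\barX_n}]\ge\bE[e^{-\ga X_n}]=\Phi(i\ga)^n$ for $\ga\le 0$, so the power series for $\phi^+_{q'}(i\ga)$ --- a series in $q'$ with nonnegative coefficients, hence with a singularity at its radius of convergence by Pringsheim's theorem --- converges only for $|q'|<1/\Phi(i\ga)$, which may be strictly smaller than $1$. Your argument therefore establishes analytic continuation only to the disc $|q'|<1/\max_{\ga\in[\mum,\mup]}\Phi(i\ga)$, or more precisely to the set of $q'$ for which condition (a) persists, not to the full open unit disc; to claim the latter you would need the additional hypothesis $\Phi(i\ga)\le 1$ on $[\mum,\mup]$ (which does hold when $\mum,\mup$ can be taken to bracket a zero of $\ga\mapsto\log\Phi(i\ga)$, but is not part of (a) as stated).
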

 
 \begin{rem}\label{rem:Cepq_term}{\rm
 Evaluating the RHS of  \eq{tVq0}, we will apply the Fourier transform and its inverse. If $f_+(x_1,x_2)$ is a piece-wise smooth
function of the first argument so that the Fourier transform (w.r.t. the first argument) decays not slower than $|\xi|^{-1}$ at infinity,
but $f_+(\cdot,x_2)$ has points of discontinuity,  then the composition of the Fourier transform and its inverse cannot recover
$f_+(\cdot,x_2)$ at the points of discontinuity. For instance, in the example of the joint cpdf, $f_+(x_1,x_2)=\bfo_{(-\infty,a_1]}(x_1)\bfo_{(-\infty,a_2]}(x_2)$,
where $a_1\le a_2$, is dicontinuous at $x_1=a_1$ and $x_2=a_2$. 
Hence, we  represent $\cEq$ in the form
$\cEq=(1-q)I+(1-q)q\Phi(D)(1-q\Phi(D))^{-1}$, and calculate the first term on the RHS of \eq{tVq0} as follows:
\bbe\label{tVq01}
 ((\cEq\otimes I)f_+)(x_1,x_2)=(1-q)f_+(x_1,x_2)
 +\frac{1}{2\pi}\int_{\Im\xi_1=\om}\frac{e^{ix_1\xi_1}(1-q)q\Phi(\xi_1)}{1-q\Phi(\xi_1)}
 \hat{(f_+)}_1(\xi_1,x_2)d\xi_1,
\ee
where  $\widehat{(f_+)}_1(\xi_1,x_2)=\cF_{x_1\to \xi_1} f_+(x_1,x_2)$ is the Fourier transform of $f_+$ w.r.t. the first argument, 
and admissible $\om\in (\mum,\mup)$ depend on the rate of increase of $f(x_1,x_2)$ as $x_1\to -\infty$. In particular, if $f$ is uniformly bounded, then any $\om\in (0,\mup)$ is admissible. If $\widehat{(f_+)}_1(\xi_1,x_2)=O(|\xi_1|^{-1})$ and $\Phi(\xi_1)=O(|\xi_1|^{-\de})$ as $\xi\to \infty$ along the line of integration, where $\de>0$, then the integrand on the RHS of \eq{tVq01} is of class $L_1$, and the integral defines a function continuous in $x_1$.
 
 }
 \end{rem}
Let
$V(G; h; n; x)$ be the price of the barrier option with
the payoff $G(X_n)$ at maturity and no rebate if the barrier $h$ is crossed before or at time $n$; the rsikless rate is 0.
Applying Theorem   \ref{thm:X_barX_exp_2} and Remark \ref{rem:Cepq_term}, we obtain
\begin{thm}\label{thm:X_barX_exp_2_barr}
 Let  a random walk $X$ on $\bR$ and $q\in (0,1)$ satisfy condition (a) of Theorem \ref{thm:X_barX_exp_2}, and let
  $G$ be a measurable function admitting the bound
$ |G(x)|\le C(e^{-\mup x}+e^{-\mum x})$, where $C$ is independent of $x\in \bR$.
 Then, for $x<h$,
 \bbe\label{eq:price_barr}
 \tV(G;h;q,x)=G(x)+(q\Phi(D)(1-q\Phi(D))^{-1}G)(x)-(1-q)^{-1}(\cEpq\bfo_{[h,+\infty)}\cEmq G)(x).
 \ee
 \end{thm}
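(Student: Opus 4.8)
The plan is to obtain \eqref{eq:price_barr} as a specialization of Theorem~\ref{thm:X_barX_exp_2} together with Remark~\ref{rem:Cepq_term}. First I would identify the barrier payoff within the framework of Theorem~\ref{thm:X_barX_exp_2}. For an upper barrier $h$ and payoff $G$ at maturity with no rebate, the value is $\bE[G(x+X_n)\bfo_{x+\barX_n<h}]$, i.e. $V(G;h;n;x)=V(f;n;x_1,x_2)$ with $x_1=x$, $x_2=0$ (or any fixed reference point below $h$, shifted appropriately), and $f(x_1,x_2)=G(x_1)\bfo_{(-\infty,h)}(\max\{x_2,x_1+\cdot\})$; more precisely one writes things so that the running-max slot records whether the barrier has been breached, and the condition $x<h$ guarantees $x_1\le x_2$ in the relevant region. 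The decay bound $|G(x)|\le C(e^{-\mup x}+e^{-\mum x})$ plus condition (a) of Theorem~\ref{thm:X_barX_exp_2} ensures the two-sided integrability needed so that the EPV operators act on $G$ in the appropriate weighted $L_\infty$ spaces, and that hypotheses (b), (c) of Theorem~\ref{thm:X_barX_exp_2} hold for this $f$.

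Next I would feed this $f$ into formula \eqref{tVq0}. The first term $((\cEq\otimes I)f_+)(x_1,x_2)$ becomes, after using the representation $\cEq=(1-q)I+(1-q)q\Phi(D)(1-q\Phi(D))^{-1}$ from Remark~\ref{rem:Cepq_term} and dividing through by $(1-q)$, precisely $G(x)+(q\Phi(D)(1-q\Phi(D))^{-1}G)(x)$ — this is just the unrestricted expectation $\bE[G(x+X_T)]$ rewritten, which is the European-type term. The second term $(\cEpq w(f;q,\cdot,x_2))(x_1)$ in \eqref{tVq0} is where the barrier enters: with the barrier indicator, the function $w$ in \eqref{eq:wqVtq} collapses to $-\bfo_{[h,+\infty)}(y)\,((\cEmq\otimes I)f_+)(y,y)$ up to the constant piece (the $((\cEmq\otimes I)f_+)(y,x_2)$ subtraction handles the survived-to-maturity contribution, which is exactly what is already counted in the European term), and $((\cEmq\otimes I)f_+)(y,y)=(\cEmq G)(y)$. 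Assembling, $(1-q)\tV = (\cEq\otimes I)f_+ - (\cEpq\bfo_{[h,+\infty)}\cEmq G)$, and dividing by $(1-q)$ and using the Remark~\ref{rem:Cepq_term} decomposition of the first term yields \eqref{eq:price_barr}.

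The main obstacle I expect is bookkeeping around the running-maximum variable: one must set up $f$, the reference point $x_2$, and the region $\{x_1\le x_2\}$ so that (i) the event $\{x+\barX_n\ge h\}$ is faithfully encoded, (ii) the subtraction in \eqref{eq:wqVtq} indeed cancels the "no-knock-in" part already present in the $\cEq$ term rather than double-counting or dropping it, and (iii) the measurability and decay hypotheses (b), (c) of Theorem~\ref{thm:X_barX_exp_2} are genuinely verified for this particular discontinuous $f$ (the indicator $\bfo_{[h,+\infty)}$ sitting between $\cEpq$ and $\cEmq$ is exactly the kind of discontinuity Remark~\ref{rem:Cepq_term} warns about, so the $(1-q)I$ split is essential to keep the non-smooth part explicit). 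Once the translation to the $V(f;n;x_1,x_2)$ notation is pinned down correctly, the rest is a direct substitution into \eqref{tVq0}–\eqref{eq:wqVtq} and the operator identity for $\cEq$, with no further analytic content beyond what Theorems~\ref{thm:X_barX_exp}–\ref{thm:X_barX_exp_2} already supply.
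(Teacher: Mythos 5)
Your proposal is correct and follows exactly the route the paper intends (the paper itself gives no more than ``apply Theorem \ref{thm:X_barX_exp_2} and Remark \ref{rem:Cepq_term}''): the clean way to pin down your bookkeeping is to take $f(x_1,x_2)=G(x_1)\bfo_{(-\infty,h)}(x_2)$ and evaluate $V(f;n;x,x)$, so that for $x<h$ the subtraction in \eq{eq:wqVtq} gives $(\cEmq G)(y)(\bfo_{(-\infty,h)}(y)-1)=-\bfo_{[h,+\infty)}(y)(\cEmq G)(y)$ and the first term of \eq{tVq0} is $(\cEq G)(x)$, which the split $\cEq=(1-q)I+(1-q)q\Phi(D)(1-q\Phi(D))^{-1}$ turns into the first two terms of \eq{eq:price_barr} after dividing by $1-q$. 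No further content is needed beyond what you wrote.
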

 \begin{rem}{\rm The advantage of the representation \eq{eq:price_barr} as compared to the equivalent formula
 \bbe\label{gen_barrier}
 \tV(G;h;q,x)=(1-q)^{-1}(\cEpq \bfo_{(-\infty,h)}\cEmq)G(x)
 \ee
 (see \cite{IDUU} for the references) is that if $\hG(\xi)=O(|\xi|)^{-1}$ and $\Phi(\xi)=O(|\xi|^{-\de})$ as $\xi\to \infty$ in a strip around or adjacent to the real axis, where $\de>0$, then all the terms on the RHS of \eq{eq:price_barr} bar the first one are H\"older continuous
 on $(-\infty, h)$, and
numerical results are more accurate.
  }\end{rem}

\subsection{Fourier transform realization, the case $q\in (0,1)$}\label{s:evalFTposq01}
In this Subsection, $q\in (0,1)$ is fixed. The RHS' of the formulas for the Wiener-Hopf factors and  formulas that we derive below admit analytic continuation w.r.t. $q$ so that the inverse $Z$-transform can be applied.  
We use $\cE^{\pm}_q=c^\pm_qI+\cE^{\pm,\pm}_q$, where $\cE^{\pm,\pm}_q=\phi^{\pm,\pm}_q(D)$, and the equality
\[
w(f;q,x_1, x_2)=\bfo_{[x_2,+\infty)}(x_1)(((\cEmq\otimes I)f_+)(x_1,x_1)-((\cEmq\otimes I)f_+)(x_1,x_2))=0,\quad x_1\le x_2.
\]
to write the second term on
the RHS of \eq{tVq0} as 
\bbe\label{tVq02}
(\cEpq w(f;q,\cdot, x_2))(x_1)=(\cE^{++}_q w(f;q,\cdot, x_2))(x_1),
\ee
and  \eq{eq:wqVtq} as
\bbe\label{eq:wqVtq2}
w(f;q, y, x_2)=c^-_q w_0(y,x_2)+w^-(f;q, y, x_2),
\ee
where $w_0(y,x_2)=\bfo_{[x_2,+\infty)}(y)(f_+(y,y)-f_+(y,x_2))$, and 
\bbe\label{eq:wqVtqm}
w^-(f;q, y, x_2)=\bfo_{[x_2,+\infty)}(y)(((\cE^{--}_q \otimes I)f_+)(y,y)-((\cE^{--}_q \otimes I)f_+)(y,x_2)).
\ee
Substituting \eq{eq:wqVtq2} into \eq{tVq02}, we obtain
\bbe\label{tVq03}
(\cEpq w(f;q,\cdot, x_2))(x_1)
=c^-_q((\cE^{++}_q\otimes I) w_0)(x_1,x_2)+ ((\cE^{++}_q\otimes I) w^-)(f;q,x_1, x_2).
\ee
In order to derive explicit integral representations for the terms on the RHS of \eq{tVq03}, we impose
 the following conditions, which can be relaxed: 
\begin{enumerate}[(a)]
\item
condition (a) of Theorem \ref{thm:X_barX_exp_2} is satisfied;
\item
 there exist $\mumpr, \muppr\in(\mum,\mup)$, $\mumpr<\muppr$ such that $f$ admits bounds 
 \beqa\label{simple_bound_f_X_barX1pr}
 |f(x_1,x_2)|&\le & C(x_2)e^{-\muppr x_1}, \ x_1\le x_2,
 \\\label{simple_bound_f_X_barX2pr}
|((\cEmq\otimes I)f_+)(x_1,x_1)|&\le& Ce^{-\mumpr x_1} , \ x_1\in\bR,
 \eqa
  where $C(x_2)$ and $C$ are independent of $x_1\le x_2$, and $x_1\in\bR$, respectively; 
  \item
 for any $x_2$, there exists $C(x_2)>0$ such that 
 \beqa\label{boundf1}
 |\widehat {(f_+)}_1(\xi_1,x_2)|&\le&  C(x_2)(1+|\xi_1|)^{-1}, \quad \xi_1\in S_{[\muppr,\mup]},\\\label{boundfw}
 |\widehat {(w_0)}_1(\eta,x_2)|&\le&  C(x_2)(1+|\eta|)^{-1}, \quad \eta\in S_{[\mum,\mumpr]};
 \eqa
 \item
 there exists $C>0$  such that for $\xi_1\in S_{[\muppr,\mup]}$ and $\xi_2\in S_{[\mum,\mumpr]}$, 
 \bbe\label{boundf2}
 |\widehat {(f_+)}(\xi_1,\xi_2)|\le C(1+|\xi_1|)^{-1}(1+|\xi_2|)^{-1};
 \ee
 \item
 there exists $\de>0$ such that $\Phi(\xi)=O(|\xi|^{-\de})$ as $(S_{[\mum,\mup]}\ni)\xi\to \infty$.
\end{enumerate}
\begin{thm}\label{thm:mainFT}
Let conditions (a)-(e) hold.   Then, for any $\om, \om_1, \om_2$ and $\omm$ satisfying
\bbe\label{ompmom1om2}
\om, \om_1\in (\muppr,\mup),\
   \om_2\in (\mum,\mumpr),\ \omm\in (\mum,\om_1+\om_2),
 \ee 
 and $x_1\le x_2$,
 \beqa\label{eq:tVqmain}
\tV(f;q;x_1,x_2)&=&f(x_1,x_2)
+\frac{1}{2\pi}\int_{\Im\xi_1=\om}\frac{e^{ix_1\xi_1}q\Phi(\xi_1)}{1-q\Phi(\xi_1)}
 \hat{(f_+)}_1(\xi_1,x_2)\\\nonumber
 &&+\frac{c^-_q}{2\pi(1-q)}\int_{\Im\eta=\omm} e^{ix_1\eta}\phi^{++}_q(\eta)\widehat {(w_0)}_1(\eta,x_2) d\eta
 \\\nonumber
&&+\frac{1}{2\pi(1-q)}\int_{\Im\eta=\omm} e^{i(x_1-x_2)\eta}\phi^{++}_q(\eta)\widehat{w^-_0}(f;q,\eta,x_2)d\eta,
\eqa
where $\widehat{w^-_0}(f;q,\eta,x_2)$ is given by
\beqa\label{eq:wqtVq4}
&&
\widehat{w^-_0}(f;q,\eta,x_2)
\\\nonumber
&=&\frac{1}{2\pi}\int_{\Im\xi_1=\om_1} d\xi_1 \, 
\frac{e^{ix_2\xi_1}}{i(\xi_1-\eta)}\phi^{--}_q(\xi_1)(\widehat{f_+})_1(\xi_1,x_2)\\\nonumber
&&+\frac{1}{(2\pi)^2}\int_{\Im\xi_1=\om_1} \int_{\Im\xi_2=\om_2}d\xi_1\,d\xi_2\, \frac{e^{ix_2(\xi_1+\xi_2)}}{i(\eta-\xi_1-\xi_2)}
\phi^{--}_q(\xi_1)(\widehat{f_+})(\xi_1,\xi_2).
\eqa
\end{thm}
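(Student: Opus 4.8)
The plan is to take the operator identity \eq{tVq0} of \theor{thm:X_barX_exp_2} as the starting point — its hypotheses are met here, since for $x_1\le x_2$ the bound \eq{simple_bound_f_X_barX1pr} with $\muppr<\mup$ implies \eq{simple_bound_f_X_barX1}, the bound \eq{simple_bound_f_X_barX2pr} with $\mumpr>\mum$ implies \eq{simple_bound_f_X_barX2} on $\{x_1\ge 0\}$, and (a) is verbatim condition (a) of that theorem — then divide it by $1-q$ and substitute into the two resulting terms the Fourier-multiplier (PDO) representations of $\cEq$, $\cE^{++}_q$ and $\cE^{--}_q$, reducing everything to iterated contour integrals. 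The hard part will not be any single identity, but the bookkeeping: the $\bfo_{[0,+\infty)}$-projections appearing below act on the Fourier side through Cauchy-type kernels $1/(i(\eta-\zeta))$ that are only conditionally integrable, so every interchange of the order of integration and every contour shift must be licensed by the polynomial decay supplied by (c)--(e) and by Proposition \ref{prop_WHF}(c), combined with the strict inequalities \eq{ompmom1om2} keeping the contours on the correct sides of one another.

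First, for the term $(1-q)^{-1}((\cEq\otimes I)f_+)(x_1,x_2)$, I would use the splitting $\cEq=(1-q)I+(1-q)q\Phi(D)(1-q\Phi(D))^{-1}$ of Remark \ref{rem:Cepq_term}, acting on the first variable; exactly as in \eq{tVq01} this produces $f(x_1,x_2)$ together with the single integral over $\Im\xi_1=\om$. By (e) and \eq{boundf1} its integrand is $O(|\xi_1|^{-1-\de})$ on any line with $\om\in(\muppr,\mup)$, hence absolutely integrable, and this gives the first two lines of \eq{eq:tVqmain}.

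Next, for $(1-q)^{-1}(\cEpq w(f;q,\cdot,x_2))(x_1)$ with $x_1\le x_2$, I would first note that $w(f;q,y,x_2)$ vanishes for $y\le x_2$ (the indicator in \eq{eq:wqVtq} is zero, and its bracket is zero on $\{y=x_2\}$), so, since $\cEpq=c^+_qI+\cE^{++}_q$, only the $\cE^{++}_q$-part contributes, as in \eq{tVq02}. I would then decompose $w=c^-_qw_0+w^-$ by means of $\cEmq=c^-_qI+\cE^{--}_q$ as in \eq{eq:wqVtq2}--\eq{eq:wqVtqm}, and apply $\cE^{++}_q=\phi^{++}_q(D)$ to each summand by Fourier inversion in the first variable. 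The summand $c^-_qw_0(\cdot,x_2)$ gives, after division by $1-q$, the third line of \eq{eq:tVqmain}, the integral over $\Im\eta=\omm$ converging by \eq{boundfw} and Proposition \ref{prop_WHF}(c); for $w^-(f;q,\cdot,x_2)$, whose support lies in $[x_2,+\infty)$, I would substitute $y=x_2+y'$, which turns its Fourier transform in the first variable into $e^{-ix_2\eta}\widehat{w^-_0}(f;q,\eta,x_2)$ with $\widehat{w^-_0}(f;q,\eta,x_2)=\int_0^{+\infty}e^{-iy'\eta}w^-(f;q,x_2+y',x_2)\,dy'$, and then applying $\phi^{++}_q(D)$ and dividing by $1-q$ gives the fourth line of \eq{eq:tVqmain}.

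Finally, to obtain the inner representation \eq{eq:wqtVq4} of $\widehat{w^-_0}(f;q,\eta,x_2)$, I would use \eq{eq:wqVtqm} to write $w^-(f;q,x_2+y',x_2)=\bfo_{[0,+\infty)}(y')(g_1(y')-g_2(y'))$, where $g_1(y')=((\cE^{--}_q\otimes I)f_+)(x_2+y',x_2+y')$ and $g_2(y')=((\cE^{--}_q\otimes I)f_+)(x_2+y',x_2)$; conditions (c)--(e) guarantee that $(\cE^{--}_q\otimes I)f_+$ and these restrictions of it are well defined, with the Fourier transforms one expects. Representing $\cE^{--}_q=\phi^{--}_q(D)$ on the first variable, and for $g_1$ also Fourier-expanding in the second variable along $\Im\xi_2=\om_2$, I would identify the Fourier transform in $y'$ of $g_2$ as $e^{ix_2\xi_1}\phi^{--}_q(\xi_1)\widehat{(f_+)}_1(\xi_1,x_2)$ on $\Im\xi_1=\om_1$, and that of $g_1$ as $\frac{1}{2\pi}e^{ix_2\zeta}\int_{\Im\xi_1=\om_1}\phi^{--}_q(\xi_1)\widehat{f_+}(\xi_1,\zeta-\xi_1)\,d\xi_1$ on $\Im\zeta=\om_1+\om_2$. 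Multiplication by $\bfo_{[0,+\infty)}$ acts on the Fourier side as $g\mapsto\frac{1}{2\pi}\int\frac{\hat g(\zeta)}{i(\eta-\zeta)}\,d\zeta$ taken over a horizontal line strictly above $\Im\eta=\omm$, which is why \eq{ompmom1om2} requires $\omm<\om_1+\om_2$; applying this to $g_1-g_2$ and, in the $g_1$-contribution, changing the variable back from $\zeta$ to $\xi_2=\zeta-\xi_1$ (so that $d\zeta=d\xi_2$ and $\Im\xi_2=\om_2$) produces precisely the single integral and the double integral on the right-hand side of \eq{eq:wqtVq4}, the opposite signs of the two Cauchy kernels reflecting the $+g_1,-g_2$ split. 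This last step, with its nested conditionally convergent integrals, is where essentially all the care with the placement of the contours \eq{ompmom1om2} and the Fubini interchanges is concentrated; everything else is routine substitution.
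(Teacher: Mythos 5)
Your proof is correct and follows essentially the same route as the paper's: start from \eq{tVq0} of Theorem \ref{thm:X_barX_exp_2}, use the splitting of Remark \ref{rem:Cepq_term} for the first two terms, the decomposition $w=c^-_qw_0+w^-$ for the remaining ones, and the shift $y=x_2+y'$ to isolate the factor $e^{-ix_2\eta}$ in $\widehat{w^-}$. The only cosmetic difference is that you encode the truncation to $[x_2,+\infty)$ as a Cauchy-kernel convolution on the Fourier side, whereas the paper evaluates $\int_{x_2}^{+\infty}e^{i(\xi_1-\eta)y}\,dy$ explicitly and licenses the interchanges by Fubini via \eq{bound_L1_1}; these are the same computation.
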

\begin{proof} Essentially, we repeat the proof of Theorem 4.1 in \cite{EfficientLevyExtremum}, with small necessary changes.
We calculate the terms on the RHS of \eq{tVq0}. The first two terms on the RHS of \eq{eq:tVqmain} follow from   \eq{tVq01}.
Consider the third term. Since \eq{boundfw} holds and $\phi^{++}_q(\eta)=O(|\eta|^{-\de_1})$ as $\eta\to\infty$ in the strip $S_{[\mum,\mup]}$, where $\de_1>0$,
the integral
\bbe\label{eq:cEppw0}
(\cE^{++}_q w_0(\cdot,x_2))(x_1)=\frac{1}{2\pi}\int_{\Im\eta=\omm} e^{ix_1\eta}\phi^{++}_q(\eta)\widehat {(w_0)}_1(\eta,x_2) d\eta
\ee
is absolutely convergent. It remains to consider $(\cE^{++}_q w^-(f;q,\cdot, x_2))(x_1)$.
If $\Im\eta=\om_-$,
\beqast
\widehat{w^-}(f;q,\eta,x_2)&=&-\int_{x_2}^{+\infty} dy\, e^{-iy\eta}\frac{1}{2\pi}\int_{\Im\xi_1=\om_1} d\xi_1 \, e^{i\xi_1 y}
\phi^{--}_q(\xi_1)(\widehat{f_+})_1(\xi_1,x_2)\\
&&+\int_{x_2}^{+\infty} dy\,e^{-iy\eta}\frac{1}{(2\pi)^2}\int_{\Im\xi_1=\om} \int_{\Im\xi_2=\om_2}d\xi_1\,d\xi_2\, e^{i(\xi_1+\xi_2) y}
\phi^{--}_q(\xi_1)(\widehat{f_+})(\xi_1,\xi_2).
\eqast
We apply Fubini's theorem to the first integral.   The integral $\int_{x_2}^{+\infty} dy\,e^{i(-\eta+\xi_1)y}=\frac{e^{ix_2(\xi_1-\eta)}}{i(\eta-\xi_1)}$ converges absolutely since $-\omm+\om_1>0$, and the repeated integral converges absolutely 
because $\phi^{--}_q(\xi)$ is uniformly bounded on the line of integration and \eq{boundf1} holds. Similarly, since 
$-\omm+\om_1+\om_2>0$, the integral $\int_{x_2}^{+\infty} dy\,e^{i(-\eta+\xi_1+\xi_2)y}=e^{ix_2(\xi_1+\xi_2-\eta)}/(i(\eta-\xi_1-\xi_2))$ converges absolutely. Since \eq{boundf2} holds, $\phi^{--}_q(\xi)=O(|\xi_1|^{-\de_1})$ as $\xi_1\to \infty$ along the line of integration, where $\de_1>0$,   and
 \bbe\label{bound_L1_1}
 \int_\bR \int_\bR d\xi_1\,d\xi_2\, (1+|\xi_1+\xi_2|)^{-1}(1+|\xi_1|)^{-1-\de_1}(1+|\xi_2|)^{-1}<\infty,
 \ee
 the Fubini's theorem is applicable to the second integral as well. Thus,
 \bbe\label{eq:wqtVq3}
 \widehat{w^-}(f;q,\eta,x_2)=e^{-i\eta x_2}\widehat{w^-_0}(f;q,\eta,x_2),
 \ee
 where $\widehat{w^-_0}(f;q,\eta,x_2)$ is given by \eq{eq:wqtVq4},
and we obtain the triple integral
\bbe\label{eq:cEppwm}
(\cE^{++}_q w^-(\cdot,x_2)(x_1)=\frac{1}{2\pi}\int_{\Im\eta=\omm} e^{i(x_1-x_2)\eta}\phi^{++}_q(\eta)\widehat{w^-_0}(f;q,\eta,x_2)d\eta.
\ee
The integrand admits a bound via $Cg(\eta,\xi_1,\xi_2)$, where
\[
g(\eta,\xi_1,\xi_2)=(1+|\eta|)^{-\de_1}(1+|\eta-\xi_1-\xi_2|)^{-1}(1+|\xi_1|)^{-1-\de_1}(1+|\xi_2|)^{-1}
\]
is of class $L_1(\bR^3)$ (see \cite[Eq.(3.24)]{EfficientLevyExtremum}). Substituting \eq{tVq01}, \eq{tVq03}, \eq{eq:cEppw0} and \eq{eq:cEppwm} 
   into \eq{tVq0}, we obtain \eq{eq:tVqmain}.

\end{proof}
\begin{rem}\label{rem:directwm} {\rm In standard situations such as in the two examples that we consider below,
the function $y\mapsto h(y)=(\cE^{--}_q\otimes I)f_+(y,y)-(\cE^{--}_q\otimes I)f_+(y,x_2)$ is
a linear combination of exponential functions (with the coefficients depending on $x_2$). Then $\widehat{w^-}(q;\eta,x_2)$ can be calculated directly, the double integral on the RHS of \eq{eq:wqtVq4} can be reduced to 1D integrals,
and the condition \eq{boundf2} replaced with the condition on $h$ similar to   \eq{boundfw}.
Analogous  simplifications are possible in more involved cases when $h$ is a piece-wise exponential polynomial in $y$.
}
\end{rem}

\subsection{Two examples}\label{s:two_examples}
 \subsubsection{Example I. The joint cpdf of $X_n$ and $\barX_n$.   }\label{ss:jointpdf}
 For $a_1\le a_2$, and $x_1\le x_2$, set $f(x_1,x_2)=\bfo_{(-\infty,\min\{a_1,x_2\}]}(x_1)\bfo_{(-\infty,a_2]}(x_2)$ and consider 
\[
  V(f;n,x_1,x_2)=\bQ[x_1+X_n\le a_1, x_2+\barX_n\le a_2].
  \] 
 If $x_2>a_2$, then $V(f;n,x_1,x_2)=0$. Hence, we assume that
  $x_2\le a_2$.   
    \begin{thm}\label{thm:CPDF}  Let $q\in (0,1)$, $a_1\le a_2, x_1\le x_2\le a_2$, 
and let the following conditions hold:
 \begin{enumerate}[(i)]
  \item 
  there exist $\mum< 0< \mup$ such that  $\forall$\ $\ga\in [\mum,\mup]$, $\bE[e^{-\ga Y}]<\infty$, and
  $1-q\Phi(i\ga)>0$;
  \item 
 there exists $\de>0$ such that $\Phi(\xi)=O(|\xi|^{-\de})$ as $(S_{[\mum,\mup]}\ni)\xi\to \infty$.
  \end{enumerate}
  Then, for any $\mum<\omm<0<\om_1<\mup$,  and $\om\in (0,\mup)$,
 \beqa\label{tVq3}
&& \tV(f;q,x_1,x_2)\\\nonumber
 &=&\bfo_{(-\infty,a_1]}(x_1) +\frac{1}{2\pi}\int_{\Im\xi_1=\om}\frac{e^{i(x_1-a_1)\xi_1}q\Phi(\xi_1)}{-i\xi_1(1-q\Phi(\xi_1))}d\xi_1
 \\\nonumber
 &&+\frac{1}{(2\pi)^2(1-q)}\int_{\Im\eta=\omm}d\eta\,e^{i(x_1-a_2)\eta}\phi^{++}_q(\eta)
  \int_{\Im\xi_1=\om_1}d\xi_1\, \frac{e^{i\xi_1(a_2-a_1)}\phi^{--}_q(\xi_1)}{\xi_1(\xi_1-\eta)}.
\eqa
    \end{thm}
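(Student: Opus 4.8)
The plan is to read off \eq{tVq3} from the general Fourier representation \eq{eq:tVqmain} of \theor{thm:mainFT}; the indicator payoff is essentially a tensor product, so almost every piece of that formula degenerates. The preliminary observation is that, since $\barX_n\ge0$ forces $x_1+X_n\le\max\{x_2,x_1+\barX_n\}$, the truncation $\min\{a_1,x_2\}$ in the definition of $f$ is inactive and $V(f;n;x_1,x_2)=\bfo_{x_2\le a_2}\,\bQ[x_1+X_n\le a_1,\ x_1+\barX_n\le a_2]$; under the standing assumption $x_2\le a_2$ this is independent of $x_2$ --- which is why the right-hand side of \eq{tVq3} carries no $x_2$ --- and it coincides with the value of the up-and-out barrier option of \theor{thm:X_barX_exp_2_barr} with $G=\bfo_{(-\infty,a_1]}$ and barrier $a_2$. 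I would therefore apply \theor{thm:mainFT} with $f$ replaced by the tensor product $g(x_1,x_2)=\bfo_{(-\infty,a_1]}(x_1)\bfo_{(-\infty,a_2]}(x_2)$ (same $V$, hence same $\tV$; the $U_+$-truncation plays no role since the first argument never exceeds the second). Checking hypotheses (a)--(e): (a) and (e) are the assumed (i), (ii); $g$ is bounded and supported in $\{x_1\le a_1\}$ in the first variable, so \eq{simple_bound_f_X_barX1pr} holds for any $\muppr\in(0,\mup)$; $((\cEmq\otimes I)g)(x_1,x_1)=[\cEmq\bfo_{(-\infty,a_1]}](x_1)\bfo_{x_1\le a_2}$ is bounded and vanishes for $x_1>a_2$, so \eq{simple_bound_f_X_barX2pr} holds with $\mumpr=0$; and the Fourier-decay conditions reduce to $\widehat{\bfo_{(-\infty,c]}}(\xi)=e^{-ic\xi}/(-i\xi)$ for $\Im\xi>0$, with \eq{boundf2} circumvented by Remark \ref{rem:directwm} since the profile of $w^-$ is a single copy of $\cE^{--}_q\bfo_{(-\infty,a_1]}$.

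The computation then runs as follows. Because $g$ is a product and $x_2\le a_2$, the maximum-correction $w_0(y,x_2)=\bfo_{[x_2,\infty)}(y)(g(y,y)-g(y,x_2))$ vanishes identically, so $w=w^-$ and the $c^-_q$-term of \eq{eq:tVqmain} disappears; from $((\cE^{--}_q\otimes I)g)(y,v)=[\cE^{--}_q\bfo_{(-\infty,a_1]}](y)\bfo_{v\le a_2}$ one gets $w^-(f;q,y,x_2)=-\bfo_{(a_2,\infty)}(y)[\cE^{--}_q\bfo_{(-\infty,a_1]}](y)=-\bfo_{(a_2,\infty)}(y)\,\bQ[y+\uX_{T_q}\le a_1]$. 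Writing $\cE^{--}_q\bfo_{(-\infty,a_1]}$ through its symbol $\phi^{--}_q$ and evaluating the elementary overshoot integral $\int_{a_2}^{\infty}e^{iy(\xi_1-\eta)}dy$ (valid when $\Im\eta<\om_1$) collapses $\widehat{w^-}(f;q,\eta,x_2)$ to a single $\xi_1$-integral --- this is the reduction of the double integral of \eq{eq:wqtVq4} promised in Remark \ref{rem:directwm}. Inserting $(\widehat g)_1(\xi_1,x_2)=e^{-ia_1\xi_1}/(-i\xi_1)$ into \eq{tVq01} yields $(1-q)\bfo_{(-\infty,a_1]}(x_1)$ plus the single $\Phi$-integral, and applying $\cE^{++}_q$ (through $\phi^{++}_q$) to $w^-$ yields the nested $\eta$--$\xi_1$ integral; dividing the sum by $1-q$ as in \eq{tVq0} gives precisely \eq{tVq3}, with $\om,\om_1\in(0,\mup)$ forced by the one-sided transforms of the indicators and $\mum<\omm<0$, $\omm<\om_1$ forced by the analyticity strips of $\phi^{\pm}_q$ and by the overshoot integral.

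I do not expect a genuine analytic obstacle; the work is bookkeeping --- tracking the exponential prefactors through \eq{tVq01}, \eq{tVq03}, \eq{eq:wqtVq4}, confirming $w_0\equiv0$ so the triple integral truly reduces to two nested ones, and placing the contours $\Im\xi_1=\om$, $\Im\xi_1=\om_1$, $\Im\eta=\omm$ inside the strip $S_{[\mum,\mup]}$ where $\Phi$ and the Wiener-Hopf factors are analytic and $\Phi$ decays (hypothesis (ii)), so that each Fubini interchange and the deformation away from the original $|z|=R$ contour are justified. If one prefers instead to invoke the barrier formula \eq{eq:price_barr} directly, the only extra point is that there $\phi^{\pm}_q$ may be replaced by $\phi^{\pm\pm}_q$ without changing anything, because the constant parts $c^{\pm}_q$ contribute integrals that vanish on closing the contours (using $a_2\ge a_1$ and $x_1\le a_2$).
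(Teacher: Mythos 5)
Your proposal is correct and follows essentially the same route as the paper's proof: both apply the general Fourier representation of Theorem \ref{thm:mainFT} to $f_+(x_1,x_2)=\bfo_{(-\infty,a_1]}(x_1)\bfo_{(-\infty,a_2]}(x_2)$, observe that $w_0\equiv 0$ (so the $c^-_q$-term drops out), compute $\widehat{(f_+)}_1(\xi_1,x_2)=e^{-ia_1\xi_1}/(-i\xi_1)$ for the first two terms, and evaluate $w^-=-\bfo_{(a_2,\infty)}(\cdot)\,\cE^{--}_q\bfo_{(-\infty,a_1]}$ directly via the overshoot integral to collapse \eq{eq:wqtVq4} to the single nested $\xi_1$-integral, exactly as in Remark \ref{rem:directwm}. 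The bookkeeping of contours and prefactors matches \eq{tVq3}.
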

    \begin{proof} We repeat the proof of Theorem 3.8 in \cite{EfficientLevyExtremum}
    with small necessary modifications. We have $f_+(x_1,x_2)=\bfo_{(-\infty,a_1]}(x_1)\bfo_{(-\infty,a_2]}(x_2), $ therefore, for $x_2\le a_2$, 
    \beqast
    w_0(y,x_2)&=&\bfo_{[x_2,+\infty)}(y)\bfo_{(-\infty,a_1]}(y)(\bfo_{(-\infty,a_2]}(y)-\bfo_{(-\infty,a_2]}(x_2))\\
    &=&-\bfo_{[x_2,+\infty)}(y)\bfo_{(-\infty,a_1]}(y)\bfo_{(a_2,+\infty)}(y)=0,
    \eqast
    hence, the third term on the RHS of \eq{eq:tVqmain} is 0. Next, 
    \[
    \widehat{(f_+)}_1(\xi_1,x_2)=\bfo_{(-\infty,a_2]}(x_2)\int_{-\infty}^{a_1}e^{-ix_1\xi_1}d\xi_1=\bfo_{(-\infty,a_2]}(x_2)\frac{e^{-ia_1\xi_1}}{-i\xi_1}d\xi_1
    \]
    is well-defined in the upper half-plane, and satisfies the bound \eq{boundf1} in any strip $S_{[\muppr, \mup]}$, where $\muppr\in (0,\mup)$.  Thus, the first two terms on the RHS of   \eq{eq:tVqmain} 
    are the first terms on the RHS of \eq{tVq3}.
      It remains to evaluate the double integral on the RHS of \eq{eq:tVqmain}. As mentioned in Remark \ref{rem:directwm}, in the present case, it is simpler to evaluate $w^-$, and then $\widehat{w^-}$, directly:  for any $x_2\le a_2$, $\om_1\in (0,\mup)$ and any $\eta\in \{\Im\eta\in (\mum,\om_1)\}$,
    \beqa\nonumber
    w^-(q,y,x_2)&=&\bfo_{(x_2,+\infty)}(y)(\cE^{--}_q\bfo_{(-\infty,a_1]})(y)(\bfo_{(-\infty,a_2]}(y)-1)\\\nonumber
    &=&-\bfo_{[a_2,+\infty)}(y)(\cE^{--}_q\bfo_{(-\infty,a_1]})(y)\\\nonumber
      &=&-\bfo_{(a_2,+\infty)}(y)\frac{1}{2\pi}\int_{\Im \xi_1=\om_1}d\xi_1\, e^{i(y-a_1)\xi_1}\frac{\phi^{--}(\xi_1)}{-i\xi_1},
    \eqa
    \beqa\label{hwm}
    \widehat{w^-}(q,\eta,x_2)&=&-\int_{a_2}^{+\infty}e^{-iy\eta}\frac{1}{2\pi}\int_{\Im \xi_1=\om_1}d\xi_1\, e^{i(y-a_1)\xi_1}\frac{\phi^{--}(\xi_1)}{-i\xi_1}\\\nonumber
    &=& -\frac{e^{-ia_2\eta}}{2\pi}\int_{\Im \xi_1=\om_1}d\xi_1\, e^{i(a_2-a_1)\xi_1}\frac{\phi^{--}(\xi_1)}{i(\eta-\xi_1)(-i\xi_1)}.
    \eqa
    It is easy to see that both integrals are absolutely convergent.
    Substituting \eq{hwm} into the double integral on the RHS of \eq{eq:tVqmain}, we obtain \eq{tVq3}.
    \end{proof}
     \begin{rem}{\rm If $x_1<a_1$, then it advantageous to move the line of integration in the first integral on the RHS of
    \eq{tVq3} down, and, on crossing the simple pole, apply the residue theorem. The first two terms
    on the RHS become $1/(1-q)$ plus  the integral over the line $\Im\xi_1=\omm$.
    }
    \end{rem} 
    \begin{rem}\label{rem:cpdf_simpl}{\rm
   The first step of the proof of Theorem \ref{thm:CPDF} implies that we can replace $\phi^{--}_q$ in the double integral on the RHS
    of \eq{tVq3} with $\phimq$. From the computational point of view, if we make the conformal change of variables, both changes do not lead to a significant increase in sizes of arrays necessary
    for accurate calculations, especially if $a_2-a_1>0$. The advantage is that it becomes unnecessary to evaluate $c^-_q$.
    Recall that the same $c^-_q$ appears for all $\xi_1$ in the formula $\phi^{--}_q(\xi_1)=\phimq(\xi_1)-c^-_q$,
    hence, it  is necessary to evaluate $c^-_q$ with a higher precision that $\phimq(\xi_1)$. At the same time, the integrand in the formula for
    $c^-_q$ decays slower at infinity than the integrand in the formula for $\phimq(\xi_1)$.
    }
    \end{rem}
    \begin{rem}\label{rem:no-touch}{\rm 
    Denote by $I_2(q;x_1,x_2)$ the double integral on the RHS of \eq{tVq3} multiplied by $1-q$.
    It follows from \eq{tVq02} that we can replace $\phi^{++}_q$ in the double integral 
     with $\phipq$. If $a_1<a_2$ and the conformal deformations are used, then this replacement causes no serious computational problems. If $a_1=a_2$, then the replacement leads to  errors typical for the Fourier inversion at points
    of discontinuity. However, in this case, 
     the RHS of \eq{tVq3} can be simplified as follows. We replace $\phi^{\pm,\pm}_q$ with
    $\phi^\pm_q$, which is admissible, then push the line of integration in the inner integral down, cross two simple poles at
    $\xi_1=0$ and $\xi_1=\eta$, and apply the residue theorem.  
    The double integral becomes the following 1D integral:
    \[
    I_2(q;x_1,x_2)=\frac{1}{2\pi}\int_{\Im\eta=\omm}d\eta\, e^{i(x_1-a_2)\eta}\frac{\phipq(\eta)(1-\phimq(\eta))}{-i\eta}.
    \]
    We push the line of integration to $\{\Im\eta=\om_1\}$ and use  the identity 
 $\phipq(\eta)\phimq(\eta)=(1-q)/(1-q\Phi(\eta))$ to obtain the
    formula for the perpetual  no-touch option: 
    \beqa\label{no-touch}
    (1-q)\tV(f,q;x_1,x_2)&=&    \frac{1}{2\pi}\int_{\Im\xi_1=\om_1}d\xi_1\, \frac{e^{i(x_1-a_2)\xi_1}\phipq(\xi_1)}{-i\xi_1},
    \ x_1\le x_2\le a_2.
    \eqa 
    Of course, \eq{no-touch} can be obtained using the main theorem directly.
        }
    \end{rem}

  \begin{rem}\label{rem:two_form_cpdf}{\rm 
   One  can push the line of integration in the outer integral on the RHS of \eq{tVq3} up and
  obtain 
  \beqast\label{I2Hilb}
  I_2(q;x_1,x_2)&=&\frac{1}{4\pi}\int_{\Im\xi_1=\om_1}d\xi_1\, e^{i(x_1-a_1)\xi_1}\frac{\phi^{++}_q(\xi_1)\phi^{--}_q(\xi_1)}{-i\xi_1}\\\nonumber
  &&+\frac{1}{(2\pi)^2}\mathrm{v.p.}\int_{\Im\eta=\om_1}d\eta\,e^{i(x_1-a_2)\eta}\phi^{++}_q(\eta)
  \int_{\Im\xi_1=\om_1}d\xi_1\, \frac{e^{i\xi_1(a_2-a_1)}\phi^{--}_q(\xi_1)}{\xi_1(\xi_1-\eta)},
\eqast
where $\mathrm{v.p.}$ denotes the Cauchy principal value. After that, one  can apply the fast Hilbert transform. However, the integrand decays very slowly at infinity, therefore, accurate calculations are possible only if very long grids are used, hence, the CPU cost is very large even for a moderate error tolerance.    }\end{rem}
    
    \subsubsection{Example II. Option to exchange the supremum for a power of the underlying} Let $\be>1$. Consider the option to exchange the  supremum $\bar S_n=e^{\bar X_n}$ for the power $S_n^\be=e^{\be X_n}$.
 The payoff function  $f(x_1,x_2)=(e^{\be x_1}-e^{x_2})_+\bfo_{(-\infty,x_2]}(x_1)$ satisfies \eq{simple_bound_f_X_barX1pr}-\eq{simple_bound_f_X_barX2pr}
 with arbitrary $\muppr>0$, $\mumpr<-\be$.  The extension
$f_+$ is defined by the same analytical expression as $f$. 
\begin{prop}\label{prop:exch_beta}
 Let $\be>1$ and let 
 conditions of Theorem \ref{thm:mainFT} hold  with $\mum<-\be, \mup> 0$.
 Then, for  $x_1\le x_2$, and any $0<\om_1<\mup$, $\mum<\omm<-\be$,
 \bbe\label{qtVbe2}
 \tV(f;q,x_1,x_2)=(1-q)^{-1}(e^{\be x_1}-e^{x_2})_++I_2(q,x_1,x_2)+(1-q)^{-1}\sum_{j=3,4}I_j(q,x_1,x_2),
 \ee
 where
 $I_j(q,x_1,x_2)$, $j=2,3,4,$ are given by \eq{eq:I2}, \eq{eq:I3} and \eq{eq:I4} below.
\end{prop}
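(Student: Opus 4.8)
The plan is to derive \eq{qtVbe2} by applying \theor{thm:mainFT} to the payoff $f(x_1,x_2)=(e^{\be x_1}-e^{x_2})_+\bfo_{(-\infty,x_2]}(x_1)$ and the extension $f_+$ given by the same expression, so the work splits into: checking the hypotheses of \theor{thm:mainFT} for this $f$, computing the Fourier-analytic ingredients of \eq{eq:tVqmain} in closed form, and reading off $I_2,I_3,I_4$. Conditions (a) and (e) of \theor{thm:mainFT} concern $X,q$ and are among the assumptions. For condition (b): $f_+(x_1,x_2)$ vanishes unless $x_2\ge0$ and $x_2/\be<x_1\le x_2$, so $f_+(\cdot,x_2)$ has compact support and \eq{simple_bound_f_X_barX1pr} holds for every $\muppr\in(0,\mup)$; the bound \eq{simple_bound_f_X_barX2pr} with any $\mumpr\in(\mum,-\be)$ will follow from the formula for $(\cEmq\otimes I)f_+(x_1,x_1)$ computed in the next step, which grows like $e^{\be x_1}$ as $x_1\to+\infty$ and vanishes for $x_1<0$. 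Conditions (c) and the weakened form of (d) (a bound on $h$, as in Remark~\ref{rem:directwm}) I would read off the explicit transforms below.

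Next I would compute the ingredients. The transform $\widehat{(f_+)}_1(\xi_1,x_2)=\int_{x_2/\be}^{x_2}(e^{\be x_1}-e^{x_2})e^{-ix_1\xi_1}\,dx_1$ is an entire, $O(|\xi_1|^{-1})$ function of $\xi_1$, which gives \eq{boundf1}. Since $f_+(y,x_2)=0$ for a.e. $y>x_2$ while $f_+(y,y)=e^{\be y}-e^y$ for $y\ge x_2\ge0$, one gets $w_0(y,x_2)=\bfo_{(x_2,+\infty)}(y)(e^{\be y}-e^y)$, hence $\widehat{(w_0)}_1(\eta,x_2)=e^{(\be-i\eta)x_2}/(i\eta-\be)-e^{(1-i\eta)x_2}/(i\eta-1)$, which is $O(|\eta|^{-1})$ and well defined exactly for $\Im\eta<-\be$; this is the source of the admissible range $\omm\in(\mum,-\be)$ in the statement, and \eq{boundfw} holds there. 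For the $w^-$-term I would follow Remark~\ref{rem:directwm}, writing $(\cE^{--}_q\otimes I)f_+(y,a)=\frac1{2\pi}\int_{\Im\xi_1=\om_1}e^{iy\xi_1}\phi^{--}_q(\xi_1)\widehat{(f_+)}_1(\xi_1,a)\,d\xi_1$ with $\om_1\in(0,\mup)$, substituting $a=y$ and $a=x_2$ into $w^-(f;q,y,x_2)=\bfo_{[x_2,+\infty)}(y)[(\cE^{--}_q\otimes I)f_+(y,y)-(\cE^{--}_q\otimes I)f_+(y,x_2)]$, and Laplace-transforming in $y$ over $[x_2,+\infty)$. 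The $a=x_2$ contribution reduces to a one-dimensional $\xi_1$-integral exactly as in the proof of \theor{thm:CPDF}; in the $a=y$ contribution, after the factor $e^{iy\xi_1}$ each term of $\widehat{(f_+)}_1(\xi_1,y)$ is of the form $c(\xi_1)e^{\la(\xi_1)y}$ with $\Re\la+\omm<0$ — the binding term having $\Re\la=\be$, which is precisely why $\omm<-\be$ is imposed (the cross term requires $\omm<(\be-1)\om_1/\be-1$, already implied by $\omm<-\be$ and $\om_1>0$) — so the $y$-integral converges and $\widehat{w^-_0}(f;q,\eta,x_2)$ reduces again to sums of one-dimensional $\xi_1$-integrals.

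Substituting these into \eq{eq:tVqmain} and using $\cEq=(1-q)I+(1-q)q\Phi(D)(1-q\Phi(D))^{-1}$, the $w_0$- and $w^-$-terms, each carrying the factor $(1-q)^{-1}$, give $(1-q)^{-1}(I_3+I_4)$; and the first term $f(x_1,x_2)=(e^{\be x_1}-e^{x_2})_+$ together with the $\Phi$-integral of \eq{eq:tVqmain} — whose contour I would shift from $\{\Im\xi_1=\om\}$ to $\{\Im\xi_1=\om_1\}$ inside $S_{[\mum,\mup]}$, crossing no poles because $1-q\Phi$ has no zeros there and $\widehat{(f_+)}_1$ is entire — get reorganized into $(1-q)^{-1}(e^{\be x_1}-e^{x_2})_++I_2$; this yields \eq{qtVbe2} with $I_2,I_3,I_4$ in the forms displayed below.

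The step I expect to be the main obstacle is the handling of $(\cE^{--}_q\otimes I)f_+$ on the diagonal. Because $f_+(\cdot,a)$ is a truncated, not a pure, exponential, its image under $\phi^{--}_q(D)$ is not a finite combination of exponentials: it carries a correction term of the form $e^y g((\be-1)y/\be)$, where $g$ is the inverse transform of $\be\,\phi^{--}_q(\xi_1)/((\be-i\xi_1)i\xi_1)$, decaying but genuinely nonzero. One must therefore be careful both in justifying the interchanges of integration, via explicit growth/decay estimates along each admissible horizontal line $\om,\om_1,\omm$, and in tracking every residue crossed when consolidating the inner $\xi_1$-contour and the outer $\eta$-contour, so as to arrive at precisely the claimed closed forms for $I_2,I_3,I_4$ rather than a more complicated equivalent expression.
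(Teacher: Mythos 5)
Your route is the same as the paper's: apply Theorem \ref{thm:mainFT} to the truncated payoff, compute $(\widehat{f_+})_1$, $\widehat{(w_0)}_1$ and $\widehat{w^-}$ in closed form (handling $w^-$ as in Remark \ref{rem:directwm}, with the $y$-dependence of $(\widehat{f_+})_1(\xi_1,y)$ expanded into exponentials inside the $\xi_1$-integral), and read off $I_2,I_3,I_4$. Your verification of the hypotheses and your convergence analysis for $\widehat{w^-}$, including the constraint forcing $\omm<-\be$, match the paper's proof.

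There is, however, a genuine gap at the $w_0$ term: the $\widehat{(w_0)}_1$ you compute does not produce the $I_3$ of \eq{eq:I3}, yet you assert that your derivation yields exactly the displayed formulas. Taking $f_+(y,x_2)=0$ for $y>x_2$ (the truncated extension, consistent with your $(\widehat{f_+})_1=\int_{x_2/\be}^{x_2}\cdots$), you obtain $w_0(y,x_2)=\bfo_{(x_2,+\infty)}(y)(e^{\be y}-e^{y})$ and $\widehat{(w_0)}_1(\eta,x_2)=e^{(\be-i\eta)x_2}/(i\eta-\be)-e^{(1-i\eta)x_2}/(i\eta-1)$. The paper instead evaluates $f_+(y,x_2)$ as $e^{\be y}-e^{x_2}$ on $\{y\ge x_2\}$ (dropping the indicator in this one place), which gives $w_0(y,x_2)=\bfo_{[x_2,+\infty)}(y)(e^{x_2}-e^{y})$, $\widehat{(w_0)}_1(\eta,x_2)=e^{x_2-ix_2\eta}/(i\eta(1-i\eta))$, and hence \eq{eq:I3}. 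The two resulting third terms are genuinely different functions of $(x_1,x_2)$ (yours carries an $e^{\be x_2}$ contribution, the paper's does not), and the difference cannot be absorbed elsewhere in your argument: the decomposition \eq{tVq0} is insensitive to how $f$ is extended off $U_+$ only if the \emph{same} extension is used simultaneously in the $\cEq$-term, in $w_0$ and in $w^-$, and your $I_2$ and $I_4$ already commit to the truncated extension. So you must either carry one fixed extension through all three terms and then show the result rearranges into \eq{eq:I2}--\eq{eq:I4} as stated, or explicitly identify and justify the evaluation of $f_+(y,x_2)$ on $\{y>x_2\}$ that underlies \eq{eq:I3}; as written, your proof establishes a formula with a different $I_3$ and then claims agreement without justification. (A minor further mismatch: you place the contour of the $\Phi$-integral at $\Im\xi_1=\om_1>0$, whereas \eq{eq:I2} has it at $\Im\xi_1=\omm<-\be$; since the integrand is analytic throughout the strip this is harmless, but it should be stated.)
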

\begin{proof}
We apply Theorem \ref{thm:mainFT} with
 $\muppr\in (0,\mup)$, $\mumpr\in (\mum,-\be)$.
For $x_2> 0$ and $\xi\in\bC$, 
 \beqast\label{widetildef1be}
  (\widehat{f_+})_1(\xi_1, x_2)&=&\int_{x_2/\be}^{x_2}e^{-ix_1\xi_1}(e^{\be x_1}-e^{x_2})dx_1\\
  &=&\frac{e^{x_2(\be-i\xi_1)}-e^{x_2(\be-i\xi_1)/\be}}{\be-i\xi_1}-e^{x_2}\frac{e^{-ix_2\xi_1}-e^{-ix_2\xi_1/\be}}{-i\xi_1}\\
  &=&e^{-ix_2\xi_1}\left(\frac{e^{x_2\be}}{\be-i\xi_1}
  +\be \frac{e^{x_2(1+i\xi_1(1-1/\be))}}{(\be-i\xi_1)(-i\xi_1)}-\frac{e^{x_2}}{-i\xi_1}\right),
  \eqast
  hence, the second term on the RHS of \eq{eq:tVqmain} equals
  \bbe\label{eq:I2}
  I_2(q,x_1,x_2)=
  \frac{1}{2\pi}\int_{\Im\xi_1=\omm}d\xi_1\,\frac{e^{i(x_1-x_2)\xi_1}q\Phi(\xi_1)}{1-q\Phi(\xi_1)}
 \left(\frac{e^{x_2\be}}{\be-i\xi_1}
  +\be \frac{e^{x_2(1+i\xi_1(1-1/\be))}}{(\be-i\xi_1)(-i\xi_1)}-\frac{e^{x_2}}{-i\xi_1}\right).
  \ee
Then we calculate 
\beqast
w_0(y,x_2)&=&\bfo_{[x_2,+\infty)}(y)((e^{\be y}-e^{y})-(e^{\be y}-e^{x_2}))=\bfo_{[x_2,+\infty)}(y)(e^{x_2}-e^{y}),
\\
\widehat{w_0}(\eta,x_2)&=&\int_{x_2}^{+\infty} e^{-iy\eta}(e^{x_2}-e^{y})dy=
 \frac{e^{x_2-ix_2\eta}}{i\eta(1-i\eta)},
\eqast
and the third term on the RHS of \eq{eq:tVqmain}:
\bbe\label{eq:I3}
I_3(q,x_1,x_2)=c^-_q\frac{e^{x_2}}{2\pi}\int_{\Im\eta=\omm}d\eta\, e^{i(x_1-x_2)\eta}\frac{\phi^{++}_q(\eta)}{i\eta(1-i\eta)}.
\ee
Next, we calculate $\hat w^-(q,\eta,x_2)$:
\beqast
\hat w^-(q,\eta,x_2)&=&\int_{x_2}^{+\infty}e^{-iy\eta}\frac{1}{2\pi}\int_{\Im\xi_1=\om_1}d\xi_1\,e^{iy\xi_1}\phi^{--}_q(\xi_1)
\left[\frac{e^{(\be-i\xi_1)y}-e^{(\be-i\xi_1)x_2}}{\be-i\xi_1}\right.\\
&&\left.+\be\frac{e^{(1-i\xi_1/\be)y}-e^{(1-i\xi_1/\be)x_2}}{(\be-i\xi_1)(-i\xi_1)}-\frac{e^{(1-i\xi_1)y}-e^{(1-i\xi_1)x_2}}{-i\xi_1}\right]\\
&=&\frac{e^{-ix_2\eta}}{2\pi}\int_{\Im\xi_1=\om_1}\phi^{--}_q(\xi_1)\left[\frac{e^{(\be-i\xi_1)x_2}}{\be-i\xi_1}\left(\frac{1}{i(\eta-\xi_1)-(\be-i\xi_1)}-\frac{1}{i(\eta-\xi_1)}\right)\right.\\
&&\hskip2.5cm +\frac{\be e^{(1-i\xi_1/\be)x_2}}{(\be-i\xi_1)(-i\xi_1)}\left(\frac{1}{i(\eta-\xi_1)-(1-i\xi_1/\be)}-\frac{1}{i(\eta-\xi_1)}\right)
\\
&&\hskip2.5cm\left.-\frac{e^{(1-i\xi_1)x_2}}{-i\xi_1}\left(\frac{1}{i(\eta-\xi_1)-(1-i\xi_1)}-\frac{1}{i(\eta-\xi_1)}\right)\right]\\
&=&\frac{e^{-ix_2\eta}}{2\pi}\int_{\Im\xi_1=\om_1}d\xi_1\,\frac{\phi^{--}_q(\xi_1)}{i(\eta-\xi_1)}\left[\frac{e^{(\be-i\xi_1)x_2}}{i\eta-\be}
+\frac{\be e^{(1-i\xi_1/\be)x_2}(1-i\xi_1/\be)}{(\be-i\xi_1)(-i\xi_1)(i\eta-1-i\xi_1(1-1/\be))}\right.\\
&&\hskip4.5cm \left.-\frac{e^{(1-i\xi_1)x_2}(1-i\xi_1)}{(-i\xi_1)(i\eta-1)}\right],
\eqast
and, finally, the double integral on the RHS of \eq{eq:tVqmain}:
\beqa\label{eq:I4}
&&I_4(q,x_1,x_2)\\\nonumber&=&\frac{1}{(2\pi)^2}\int_{\Im\eta=\omm}d\eta., e^{i(x_1-x_2)\eta}\phi^{++}_q(\eta)
\int_{\Im\xi_1=\om_1}d\xi_1\,e^{-ix_2\xi_1}\frac{\phi^{--}_q(\xi_1)}{i(\eta-\xi_1)}\\\nonumber
&&\cdot\left[\frac{e^{\be x_2}}{i\eta-\be}
+\frac{\be e^{(1+i\xi_1(1-1/\be))x_2}(1-i\xi_1/\be)}{(\be-i\xi_1)(-i\xi_1)(i\eta-1-i\xi_1(1-1/\be))}
-\frac{e^{x_2}(1-i\xi_1)}{(-i\xi_1)(i\eta-1)}\right].
\eqa

\end{proof}
\section{Efficient Fourier transform realizations}\label{s:evalFT}
\subsection{Conformal deformations }\label{ss:conf_def_1}
The integrals on the RHS of \eq{tVq3}, and, especially, in the formulas for the Wiener-Hopf factors,
decay very slowly at infinity, therefore, very long grids are needed to satisfy even a moderate error tolerance.
The sizes of the grids drastically decrease if the conformal deformations of the lines of integration with the subsequent conformal changes of variables and application of the simplified trapezoid rule are used, as in \cite{paraLaplace,paired,Contrarian}, where options with continuous monitoring are considered. Below, we adjust the constructions from  \cite{paraLaplace,paired,Contrarian}  to random walks,
with an additional twist:  in the case of finite variation processes with non-zero drift, 
in some situations, it may be necessary to use not the sinh-acceleration but
another family of apparently inferior deformations considered in \cite{ConfAccelerationStable}.

For $\gam\le 0\le \gap$, $\gam<\gap$, set $\cC_{\gam,\gap}=\{\rho e^{i\varphi}\ |\ \rho>0, \varphi\in (\pi-\gap, \pi-\gam)\cup (\gam,\gap)\}$.
As it is shown in \cite{SINHregular,EfficientAmenable}, in wide classes of L\'evy models, the characteristic functions $\Phi_{\barDe}$ of $X_{\barDe}$, where $\barDe>0$ is the time interval between monitoring dates, are {\em sinh-regular}. This   means that there exist $C, c>0$, $\nu\in (0,2]$, $\mum\le 0\le \mup$ and $\gam\le 0 \le\gap$,
$\mum<\mup$, $\gam<\gap$,  independent of $\barDe$, such that
$\Phi_{\barDe}$ admits analytic continuation to $i(\mum,\mup)+(\cC_{\gam,\gap}\cup \{0\})$, and obeys the bound
\bbe\label{PhiSINHbound}
|\Phi_{\barDe}(\xi)|\le C \exp(-c\barDe|\xi|^\nu), \quad \xi\in i(\mum,\mup)+(\cC_{\gam,\gap}\cup \{0\}).
\ee
If $X$ is the Variance Gamma processes, the characteristic function decays slower at infinity:
\bbe\label{PhiSINHboundVG}
|\Phi_{\barDe}(\xi)|\le C (1+|\xi|)^{-c\barDe}, \quad \xi\in i(\mum,\mup)+(\cC_{\gam,\gap}\cup \{0\}).
\ee 
Typically, $c<1$ or even $<0.1$, hence, for the options with daily (or even weekly) monitoring, $\Phi_{\barDe}$ decays very slowly at infinity, for Variance Gamma processes and processes close to the Variance Gamma ($\nu>0$ close to 0), especially slowly.
This implies that even a moderate precision is impossible to achieve even at a large CPU cost, for options of long maturity especially.
The conformal deformation technique allows one to greatly increase the rate of the decay of the integrand at infinity.

If \eq{PhiSINHbound} or \eq{PhiSINHboundVG} hold, then it is possible to find appropriate conformal deformations of the contours of integration in all formulas. In the case of L\'evy processes of finite variation, with non-zero drift
$\mu$,
the characteristic function $\Phi_{\barDe}$ is of the form $\Phi_{\barDe}=e^{i\mu\barDe\xi}\Phi^0_{\barDe}$, where 
$\Phi^0_{\barDe}$ obeys the bound \eq{PhiSINHbound} or \eq{PhiSINHboundVG} in a cone $\cC_{\gam,\gap}$, where $\gam<0<\gap$,
with $\nu<1$.

\subsection{Evaluation of the Wiener-Hopf factors}\label{sss:WHFI} For $\om_1\in \bR, b>0$ and $\om\in (-\pi/2,\pi/2)$, introduce the map $y\mapsto \chi_{\om_1,b,\om}(y)=i\om_1+b\sinh(i\om+y)$.
For all $\xi$ above  the angle $i\mum+(e^{i(\pi-\gam)}\bR_+\cup e^{i\gam}\bR_+)$, we can find $\om_1^-\in \bR$, $b^->0$ and $\om^-\in (\gam, \pi/2)$ such that the contour $\cL_{\om^-_1,b^-,\om^-}:=\chi_{\om_1^-,b^-,\om^-}(\bR)$
is below $\xi$ but above the angle. Hence, we can deform the line of integration in \eq{phip1}
into $\cL_{\om^-_1,b^-,\om^-}$, make the change of variables $\eta=\eta^-(y):=\chi_{\om_1^-,b^-,\om^-}(y)$ and obtain
\bbe\label{phip1sinh}
\phipq(\xi)=\exp\left[-\frac{b^- }{2\pi i}\int_{\bR}\frac{\xi\ln[(1-q)/(1-q\Phi(\eta^-(y))]}{
\eta^-(y)(\xi-\eta^-(y))}\cosh(i\om^-+y)dy\right].
\ee
 Similarly, for any $\xi$ below the angle $i\mup+(e^{i(\pi-\gap)}\bR_+\cup e^{i\gap}\bR_+)$, we can find $\om_1^+\in \bR$, $b^+>0$ and $\om^+\in (-\pi/2, \gap)$ such that the contour $\cL_{\om^+_1,b^+,\om^+}:=\chi_{\om_1^+,b^+,\om^+}(\bR)$
is above $\xi$ but below the angle. Hence, we can deform the line of integration in \eq{phim1}
into $\cL_{\om^+_1,b^+,\om^+}$, make the change of variables $\eta=\eta^+(y):=\chi_{\om_1^+,b^+,\om^+}(y)$ and obtain
\bbe\label{phim1sinh}
\phimq(\xi)=\exp\left[\frac{b^+}{2\pi i}\int_{\bR}\frac{\xi\ln[(1-q)/(1-q\Phi(\eta^+(y))]}{
\eta^+(y)(\xi-\eta^+(y))}\cosh(i\om^++y)dy\right].
\ee
In order that the deformation be justified, it is necessary that, in the process of the deformation, the fractions under the
log-sign in \eq{phip1sinh} and  \eq{phim1sinh} do not equal 0 for all $q$ and $\eta$ of interest; in order to avoid complications stemming from the analytic continuation to an appropriate Riemann surface, it is advisable to make sure that the fraction does not assume values in $(-\infty,0]$ in the process of the deformation. See Fig. \ref{WHF_disc_SINH_cond_nu1.2}
for an illustration. 

\vskip0.1cm
\noindent
{\sc Choice of $\om^\pm.$}
If $\gam<0<\gap$, then it is possible 
to  choose $\om^-\in (\gam,0)$ and $\om^+\in (0,\gap)$. If $\gam=0$, then both $\om^\pm\in (0,\gap)$, and if $\gap=0$, then
both $\om^\pm\in (\gam,0)$.  When the double integral on the RHS of \eq{tVq3} is evaluated, we need
to calculate the Wiener-Hopf factors at the points on two contours $\cL^\pm:=\cL_{\om^\pm_1, b^\pm, \om^\pm}$. In order to increase
the width of of the strip of analyticity of each of the integrands on the RHS' of \eq{phip1sinh} and \eq{phim1sinh}, 
one should take $\om^-=\gam+(\gap-\gam)/3$, $\om^+=\gap-(\gap-\gam)/3$.

 In the case of L\'evy processes of finite variation, with non-zero drift
$\mu$,
the characteristic function $\Phi_{\barDe}$ is of the form $\Phi_{\barDe}=e^{i\mu\barDe\xi}\Phi^0_{\barDe}$, where 
$\Phi^0_{\barDe}$ obeys the bound \eq{PhiSINHbound} or \eq{PhiSINHboundVG} in a cone $\cC_{\gam,\gap}$, where $\gam<0<\gap$,
with $\nu<1$.
If $\mu>0$, $\Phi_{\barDe}$ obeys the bound \eq{PhiSINHbound} or \eq{PhiSINHboundVG} in the cone $\cC_{0,\gap}$, and if $\mu<0$, then in the cone $\cC_{\gam,0}$. 
 If $\mu>0$, it is advantageous to calculate $\phimq(\xi)$ using \eq{phim1sinh} with $\om^+>0$, and then, if $\phipq(\xi)$ is needed,  use the Wiener-Hopf factorization identity. 
If $\mu<0$, it is advantageous to calculate $\phipq(\xi)$ using \eq{phip1sinh} with $\om^-<0$, and then, if $\phimq(\xi)$ is needed,  use \eq{eq:whf_random}.

\subsection{Evaluation of the integrals on the RHS of \eq{tVq3}}\label{evalv1D} 

If $x_1-a_1\ge 0$, it is advantageous to deform the line of integration
upwards into a contour of the form $\cL_{\om^+_1,b^+,\om^+}$, where $\om^+>0$, and if $x_1-a_1\le 0$, then into a 
a contour of the form $\cL_{\om^-_1,b^-,\om^-}$, where $\om^-<0$. If $x_1-a_1=0$, then any $\om\in (\gam,\gap)$ is admissible, and
$\om=(\gam+\gap)/2$ is (approximately) optimal. However, if $\Phi_{\barDe}$ is of the form $\Phi_{\barDe}=e^{i\mu\barDe\xi}\Phi^0_{\barDe}$, where 
$\Phi^0_{\barDe}$ obeys the bound \eq{PhiSINHbound} or \eq{PhiSINHboundVG} in a cone $\cC_{\gam,\gap}$, where $\gam<0<\gap$,
with $\nu<1$
 and $\mu>0$,
then the deformation with $\om^-<0$ is impossible because, for $|q|=R<1$, 
$1-q e^{i\barDe\mu\xi}\Phi^0_{\barDe}(\xi)$ equals 0 for some $\xi$ in the process of deformation. In this case, as in \cite{ConfAccelerationStable},
we use a less efficient family of conformal maps of the form
\bbe\label{sub-pol}
\chi_{S;\om,m, a}(y)=(y+i\om)\ln^m(a^2+(y+i\om)^2),
\ee
where $\om\in \bR, a>|\om|,$ and $m\ge 1$ is an integer. As $y\to\pm \infty$,
\[
\chi_{S;\om,m, a}(y)=(2\ln y)^m(y+i\om(1+m/\ln |y|)+O(|y|^{-1}),
\]
therefore, if we take $\om<0$ and change the variable $\xi=\chi_{S;\om,a,a}(y)$, then the exponent 
$e^{i\barDe\mu\xi(y)}$ increases as $y\to \infty$ in a strip around $\bR$ slower than the factor $\Phi_{\barDe}^0(\xi(y))$ decays
at infinity, and the product decays faster than prior to the change of variables. If $x_1-a_1>0$, we use $\om>0$.

Consider the repeated integral. Since $x_2-a_2<0$, in the outer
integral, we deform the line of integration so that the wings of the deformed contour point downwards. If 
the bound \eq{PhiSINHbound} (or \eq{PhiSINHboundVG}) holds in a cone $\cC_{\gam,\gap}$ where $\gam<0$, we use
the map $\chi_{\om^-_1,b^-,\om^-}$ with $\om^-<0$. As in the case of 1D integral, it may be necessary to use the map
$\chi_{S;\om,m, a}$ with $\om<0$. Since $a_2-a_1\ge 0$, in the inner integral, we deform the line of integration so that the wings of the deformed contour point upwards. If 
the bound \eq{PhiSINHbound} (or \eq{PhiSINHboundVG}) holds in a cone $\cC_{\gam,\gap}$ where $\gap>0$, we use
the map $\chi_{\om^+_1,b^+,\om^+}$. As in the case of 1D integral, it may be necessary to use the map 
$\chi_{S;\om,m, a}$ with $\om>0$. Note that a less efficient family of deformations must be used at most once in the 1D-integral, and
at most once in the repeated integral, and, in all cases, the Wiener-Hopf factors can be calculated using the sinh-acceleration. 
 
If \eq{PhiSINHbound} or \eq{PhiSINHboundVG} hold, then it is possible to find appropriate conformal deformations of the contours of integration in all formulas. In the case of L\'evy processes of finite variation, with non-zero drift
$\mu$,
the characteristic function $\Phi_{\barDe}$ is of the form $\Phi_{\barDe}=e^{i\mu\barDe\xi}\Phi^0_{\barDe}$, where 
$\Phi^0_{\barDe}$ obeys the bound \eq{PhiSINHbound} or \eq{PhiSINHboundVG} in a cone $\cC_{\gam,\gap}$, where $\gam<0<\gap$,
with $\nu<1$, then the conformal deformation of the contour of integration in the $Z$-inversion formula is impossible, and only trapezoid rule can be applied.

\section{Algorithm and numerical examples}\label{s:numer}
We take $x_1=x_2=0$ and calculate the joint cpdf $F(T,a_1,a_2)= V(T,a_1,a_2; 0, 0)$ assuming that the cone of analyticity
contains the real line: $\gam<0<\gap$. This allows us to use two contours in $\xi_1$ and $\eta$ planes for all purposes, one in the lower half-plane, the other in the upper half-plane. 
If either $\gam=0$ or $\gap=0$, then, firstly, in \eq{tVq3}, one of the lines of integration can be deformed using the sinh-map, but the
other line
can deformed using a less efficient family of deformations only (see Sect. \ref{evalv1D}), and, secondly, for the calculation of the Wiener-Hopf factors, an additional ``sinh-deformed" contour is needed. Hence, the total number of the contours is three, not two, as in the algorithm below.

\begin{enumerate}[Step I.]
\item 
 Following the recommendation in Sect.\ref{s:eff_inverse_ze}, choose either the parameters for the trapezoid rule
$M_0$ and $M=2*M_0+1$ and construct the grid $\vec{q}=R*\exp((i*2*\pi/M)*(0:1:M_0))$ or
choose the sinh-deformation and grid for the simplified trapezoid rule: $\vec{y}=\ze_\ell*(0:1:M_0)$,
$\vec{q}=\sg_\ell+i*b_\ell*\sinh(i*\om_\ell+\vec{y})$.
Calculate  the derivative $\vec{der_\ell}=i*b_\ell*\cosh(i*\om_\ell+\vec{y})$.
Note that if double precision arithmetic is used, the choice of $R$, $\sg_\ell$ and $b_\ell$ must depend on $T$ but can be independent of $x_1,x_2, a_1,a_2$, at some loss in the efficiency of the algorithm. For large $n'$s, this leads to a significant increase of
the number of terms in the trapezoid rule. In the case of the sinh-acceleration, the effect is less pronounced but
leads to worse results for very large $n$, as in the numerical examples for $T=15$ below. 
\item
Choose  the sinh-deformations and grids for the simplified trapezoid rule on $\cL^\pm$: $\vec{y^\pm}=\ze^\pm*(-N^\pm:1:N^\pm)$,
$\vec{\xi^\pm}=i*\om_1^\pm+ b^\pm*\sinh(i*\om^\pm+\vec{y^\pm})$.  Calculate $\Phi^\pm=\Phi(\vec{\xi^\pm})$ and 
$\vec{der^\pm}=b^\pm*\cosh(i*\om^\pm+\vec{y^\pm}).
$
\item
Calculate the arrays $D^+=[1/(\xi^+_j-\xi^-_k)]$ and $D^-=[1/(\xi^-_k-\xi^+_j)]$ (the sizes are $(2*N^++1)\times (2*N^-+1)$
and $(2*N^-+1)\times (2*N^++1)$, respectively).
\item
{\sc The main block.}  For given $x_1,x_2, a_1,a_2$, in the cycle in $q\in \vec{q}$, evaluate
\begin{enumerate}[(1)]
\item
$\phipq$ at points of the grid $\cL^+$ and  $\phimq$ at points of the grid $\cL^-$: 
\[
\vec{\phi^\pm_q}=\exp\left[((\mp\ze^\pm*i*\ze^\mp/(2*\pi))*\vec{\xi^\pm}.*(\log((1-q)./(1-q\Phi^\mp))./\vec{\xi^\mp}.*\vec{der^\mp})*D^\pm)\right];
\]
\item
calculate $\phi^\pm_q$ at points of the grid $\cL^\mp$:
$
 \vec{\phi^\pm_{q,\mp}}=(1-q)./(1-q\Phi^\mp)./\vec{\phi^\mp_q};
 $
 \item
 evaluate the 2D integral on the RHS of \eq{tVq3}
 \beqast
 Int2(q)&=&((\ze^-*\ze^+/(2*\pi)^2)*(\exp(-i*a_2*\vec{\xi^-}).* \vec{\phi^+_{q,-}}.*\vec{der^-})*D^+)\\
 && *\mathrm{conj}((\exp((i*(a_2-a_1))*\vec{\xi^+}).* \vec{\phi^-_{q,+}}/\vec{\xi^+}.*\vec{der^+})').
 \eqast
 \item
 if $x_1-a_1>0$, use  arrays $\vec{\xi^+}, \vec{der^+}, \Phi^+$ to evaluate $Int1(q)$, the 1D integral on the RHS of \eq{tVq3}; 
   if $x_1\le a_1$, use  arrays $\vec{\xi^-}, \vec{der^-}, \Phi^-$ instead and add $1/(1-q)$;
 \item
 set $Int(q)=Int1(q)./(1-\vec{q})+Int2(q)$.
 \end{enumerate}
 \item
 Set  $Int(q_1)=Int(q_1)/2$.
 \item
 If the sinh-acceleration is used for the inverse $Z$-transform, calculate
 \[
V_n=(\ze_{\ell}/\pi)*real(\mathrm{sum}(\vec{q}.^{-n-1}.*Int(\vec{q}).*\vec{der_\ell})); \]
if the trapezoid rule is used, calculate
\[
V_n=(2/M)*real(\mathrm{sum}(\vec{q}.^{-n}.*Int(\vec{q})).\] 
\end{enumerate}

Numerical results are produced using Matlab R2017b on MacBook Pro, 2.8 GHz Intel Core i7, memory 16GB 2133 MHz. 
The CPU times reported below can be significantly improved because 
we use the same grids for the calculation of the Wiener-Hopf factors $\phi^\pm_q$ and evaluation of integrals on the RHS of  \eq{tVq3}.
However, $\phi^\pm_q$ need to be evaluated only once and used for all points $(a_1,a_2)$. But if $x_1-a_2$ and $a_2-a_1$ are not very small in absolute value, then much shorter grids can be used. See, e.g., examples in \cite{iFT,paraHeston,SINHregular,Contrarian}.
Therefore, if the arrays $(x_1-a_2,a_2-a_1)$ are large, then the CPU time can be decreased  using shorter arrays 
for calculation of the integrals on the RHS of  \eq{tVq3}. 
Furthermore, the main blocks of the program admit the trivial parallelization.

In the two examples that we consider, the characteristic function is $\Phi(\xi)=e^{-\barDe \psi(\xi)}$, where $\psi$ is the characteristic exponent 
 $\psi(\xi)=c\Gamma(-\nu)(\lp^\nu-(\lp+i\xi)^\nu+(-\lm)^\nu-(-\lm-i\xi)^\nu)$ of a KoBoL process\footnote{the class of processes constructed in \cite{genBS,KoBoL}; a subclass which was used in the numerical examples in
\cite{genBS,NG-MBS} was renamed CGMY model later.}, where
$\lp=1,\lm=-2$ and
(I) $\nu=0.2$, hence, the process is close to Variance Gamma process; (II) 
$\nu=1.2$, hence, the process is close to the Normal Inverse Gaussian process (NIG). In both cases, $c>0$ is chosen so that the second instantaneous moment $m_2=\psi^{\prime\prime}(0)=0.1$. The time step is $\barDe=1/252$ (daily monitoring).
For $X_0=\barX_0=0$, 
we calculate the joint cpdf $F(T,a_1,a_2):=V(T,a_1,a_2; 0,0)$ for $T=15$ in Case (II) and for $T=0.25,1,5,15$
in Case (I).  In both cases, $a_1$ is in the range $[-0.075,0.1]$ and $a_2$ in the range
$[0.025,0.175]$; the total number of points $(a_1,a_2)$, $a_1\le a_2$, is 44. We show the results for $T=0.25, 5$ and $T=15$ because
the errors, CPU times and sizes of arrays in the case $T=1$ can be approximated well by interpolation of the results for $T=0.25$ and $T=5$.

The numerical examples demonstrate the clear advantage of the sinh-acceleration applied to the inverse $Z$-transform vs the trapezoid rule; the advantage increases proportionally to the number of steps because the sinh-acceleration requires approximately the same number of terms of the simplified trapezoid rule whereas the number of terms in the trapezoid rule increases. Note, however,
that if high precision arithmetic is used then the trapezoid rule with much smaller number of terms can be used.

We also show the errors of the approximation of the continuous time model with the model with daily monitoring.
The probabilities in the continuous time model are calculated using the method in \cite{EfficientLevyExtremum}.
As expected, the approximation errors increase with the number of steps but remain fairly good even at $T=15$.

\section{Conclusion}\label{s:concl}

There exists a large body of literature devoted
to calculation of expectations $V(f;T;x_1,x_2)$ of functions of spot value $x_1$ of $X$ and its running maximum or minimum $x_2$ and related optimal stopping problems,
standard examples being  barrier and American options, and lookback options with barrier and/or American features. 
See, e.g., \cite{GSh,barrier-RLPE,NG-MBS,BLSIAM02,perp-Bermuda,amer-put-levy,AAP,AKP,KW1,kou,FusaiAbrahamsSguarra06,IDUU,single,MSdouble,BLdouble,beta,KudrLev11,paraLaplace,COS2,COS3,BIL,MarcoDiscBarr,HaislipKaishev14,FusaiGermanoMarazzina,kirkbyJCompFinance18,Kirkby-Nguen-Cui17,Linetsky08,feng-linetsky09,LiLinetsky2015,Contrarian,BSINH} and the bibilographies therein.
In many papers, in the infinite time horizon case, the Wiener-Hopf factorization technique in various forms is used, and the finite time
horizon problems are reduced to the infinite time horizon case using the Laplace transform or its discrete version. The present paper belongs to this strand of the literature. We consider random walks, equivalently, in the context of option pricing, barrier and lookback options with discrete monitoring. 

At the first step, 
as in \cite{FusaiAbrahamsSguarra06}, where barrier options with discrete monitoring in the Brownian motion model are priced, 
we use the $Z$-transform, which is the discrete time counterpart of the Laplace transform. The latter was used
in the continuous time case in a number of publications starting with \cite{barrier-RLPE,NG-MBS}. The first contribution of the present paper is the new numerical method
for the inverse $Z$-transform, which is more efficient than the trapezoid rule. In both continuous time and discrete time cases,
the application of the Laplace and $Z$-transforms reduces the problem to pricing the corresponding options in the infinite time horizon.
The second contribution of the present paper is a general formula for the expectation of a function of a random walk and its supremum process. The formula generalizes the formulas for the barrier options in the random walk and L\'evy models derived in
\cite{barrier-RLPE,NG-MBS,perp-Bermuda,IDUU,single}, and it is a counterpart of the general formula for the L\'evy processes
derived in \cite{EfficientLevyExtremum}. Both formulas use the expected present value operators (EPV-operators) technique,
which is the operator form of the Wiener-Hopf factorization. The last contribution of the paper is the set of efficient numerical realizations of
the general formulas, which we explain in detail in the case of the calculation of the joint probability distribution of the random walk
and its supremum. The numerical examples demonstrate that the method based on the sinh-acceleration for the
inverse $Z$-transform can achieve the accuracy of the order of E-14 and better using Matlab and Mac with moderate characteristics, in a second or fraction of a second, and the precision of the order of 
E-10 in 20-30 msec., for options of maturity in the range $T=0.25-15Y$. In all cases, the sizes of the arrays are  moderate. In particular, the
number of points used for the $Z$-transform inversion is of the order of 2-5 dozens or even fewer.
If the trapezoid rule is used, the size of arrays and CPU time increase with the maturity, and, for maturity $T=15$,
approximately 3,000 terms are needed. When the trapezoid rule is applied, the  CPU time is several times larger in all cases. We also compare the results in
the case of continuous monitoring using the methods developed in \cite{EfficientLevyExtremum} and demonstrate that in the case of daily monitoring, the relative differences are less than 1\% even for $T=15$ for a process close to the Variance Gamma, and less than 5\%
for a process close to NIG.

Other methods for pricing barrier and lookback options with discrete monitoring cannot achieve the precision E-10 even at a much larger CPU cost. COS method \cite{COS2,COS3} introduces an additional source of errors, and the errors accumulate very fast. As numerical examples in \cite{MarcoDiscBarr} show, the errors of COS can be of the oder of 10\% even for options of short maturity, and blow up
for maturity $T=1Y$. BPROJ method \cite{kirkbyJCompFinance18,Kirkby-Nguen-Cui17,BSINH} also introduces an error, which accumulates but not as fast as the error of COS. Furthermore, the error of the approximation of the transition density
in BPROJ method is in the norm of the Sobolev space $H^2(\bR)$, hence, very large for distributions close to the Variance Gamma -
and, for small monitoring intervals, in the case of the Variance Gamma model, the $H^2$-norm is $+\infty$ (see \cite{BSINH} for
the detailed analysis of COS, BROJ and filtering used in the literature to increase the speed of convergence - at the cost of additional errors). The Hilbert transform approach (see, e.g., \cite{feng-linetsky09,FusaiGermanoMarazzina}) requires long grids, and the grids have to be extremely long for small time intervals and processes of finite variation. In addition, it is very difficult to
accurately estimate the accumulation of errors. The method of \cite{MarcoDiscBarr}, where the calculations are in the state space,
allows one to derive sufficiently accurate error bounds and recommendations for the choice of the parameters of the numerical scheme.
However, the grids must increase with time to maturity, and, in the result, for options of maturity more than a year, even the precision of the order of E-05 requires much more CPU time than the method of the present paper.

\bibliography{LookbackDiscII.bbl}

\appendix 

\section{Technicalities}\label{s:tech}
\subsection{Proof of Theorem \ref{disctraperror}}\label{ss:proof_disctraperror}
First, let $h(z) = z^m$ for some integer $m$.  Then $T_M(h) = 0$ if $M$
 does not divide $m$, and $T_M(h) = 1$ if $M$ divides $m$.  This is a standard exercise about sums of roots of unity.
  Under conditions of the theorem,  $h(z)$ has a Laurent series expansion $h(z) = \sum_{j\in \bZ} b_j z^j$ 
which converges uniformly on the unit circle.  Then $I(h) = b_0$ and $T_n(h)$ is the sum of $b_j$ for all $j$
 that are divisible by $M$.  Hence, $|T_M(h)-I(h)|$ is bounded by the sum of $|b_j|$, where $j$ ranges over all nonzero integers that are divisible by $M$.
We have
 \beqast
 b_j&=&\frac{1}{2\pi i}\int_{|z|=1}z^{-j-1}h(z)dz
 =\frac{1}{2\pi i}\int_{|z|=\rho}z^{-j-1}h(z)dz
 =\frac{1}{2\pi i}\int_{|z|=1/\rho}z^{-j-1}h(z)dz.
 \eqast
 Hence,
 \beqast
 \sum_{j>0}|b_{Mj}|&\le&\sum_{j>0} \rho^{-Mj-1}\int_{|z|=\rho}|h(z)|\frac{dz}{2\pi i}
 =\frac{\rho^{-M}}{1-\rho^{-M}}\frac{1}{2\pi i}\int_{|z|=\rho}|h(z)|\frac{dz}{z},
 \eqast
 and, similarly, 
 \[
 \sum_{j<0}|b_{Mj}|\le \frac{\rho^{-M+1}}{1-\rho^{-M}}\frac{1}{2\pi i}\int_{|z|=1/\rho}|h(z)|\frac{dz}{z}.
 \]
Adding the two inequalities finishes the proof.

\subsection{Proof of Proposition \ref{prop_WHF}}\label{ss:proof_prop_WHF}  (a) follows from the following three facts: $\Phi(0)=1$; $\Phi$ is continuous on $i[\lm,\lp]$;  and $|\Phi(\xi)|\le \Phi(-\Im\xi)$. 
(b) Take $\xi\in S_{(\mum,\mup)}$ and note that the integrands
are analytic in $S_{[\mum,\mup]}$, with the only simple pole at $\eta=\xi$ and decay as $|\eta|^{-2}$ as $(S_{[\mum,\mup]}\ni)\eta\to\infty$ (the apparent singularity at $\eta=0$ is removable). By the residue theorem,
\[
\ln\frac{1-q}{1-q\Phi(\eta)}=-\frac{1}{2\pi i}\int_{\Im\xi =\omm}\frac{\xi\ln\frac{1-q}{1-q\Phi(\eta)}}{\eta(\xi-\eta)}d\eta+
\frac{1}{2\pi i}\int_{\Im\xi =\omp}\frac{\xi\ln\frac{1-q}{1-q\Phi(\eta)}}{\eta(\xi-\eta)}d\eta,
\]
hence, \eq{eq:boundPhiq} holds for $\phi^{\pm,'}_q(\xi)$ given by the RHS' of \eq{phip1}-\eq{phim1} and all $\xi\in S_{(\mum,\mup)}$.
Since $\phi^{+,'}_q$  and $\phipq$ are analytic and uniformly bounded in the upper half-plane, and
$\phi^{-,'}_q$  and $\phimq$ are analytic and uniformly bounded in the lower half-plane, \eq{phip1}-\eq{phim1}
follow from the uniqueness of the Wiener-Hopf factorization.

 (c) The integrals on the RHS' of \eq{phip1} and \eq{phim1} do not change if we omit the factor $1-q$ under the log sign.
Using $\xi/(\eta(\xi-\eta))=1/\eta+1/(\xi-\eta)$, we conclude that it suffices to prove that, for any $\eps>0$ and $A>0$, there exists $C_{A,\eps}>0$ such that for any $\xi\in S_{[-A,A]}$,
 \bbe\label{boundWHFeps}
 \int_\bR \frac{(1+|\eta|)^{-\de}}{1+|\xi-\eta|}d\eta \le C_{A,\eps}(1+|\xi|)^{-\de+\eps}.
 \ee
 We consider the integrals over $I_1=\{\eta\ |\ |\eta|\le (1+|\xi|)/2\}$, $I_2=\{\eta\ |\  |\eta|\ge 2(1+|\xi|)\}$ 
 and $I_3=\{\eta\ |\ (1+|\xi|)/2\le |\eta|\le 2(1+|\xi|)\}$:
 \beqast
  I_1&\le & C(1+|\xi|)^{-1} \int_0^{(1+|\xi|)/2} (1+|\eta|)^{-\de}d\eta=C_1(1+|\xi|)^{-\de},\\
  I_2&\le & C(1+|\xi|)^{-1} \int_{2(1+|\xi|)}^{+\infty} |\eta|^{-1-\de}d\eta=C_1(1+|\xi|)^{-\de},\\
  I_3&\le & C(1+|\xi|)^{-\de} \int_{(1+|\xi|)/2}^{2(1+|\xi|)}|\eta-\xi|^{-1}d\eta\le C_1(1+|\xi|)^{-\de} \ln (1+|\xi|),
\eqast
where $C,C_1$ are independent of $\xi$.

 \section{Figures and tables}\label{ss:figures}
  \begin{figure}
\scalebox{0.75}
{\includegraphics{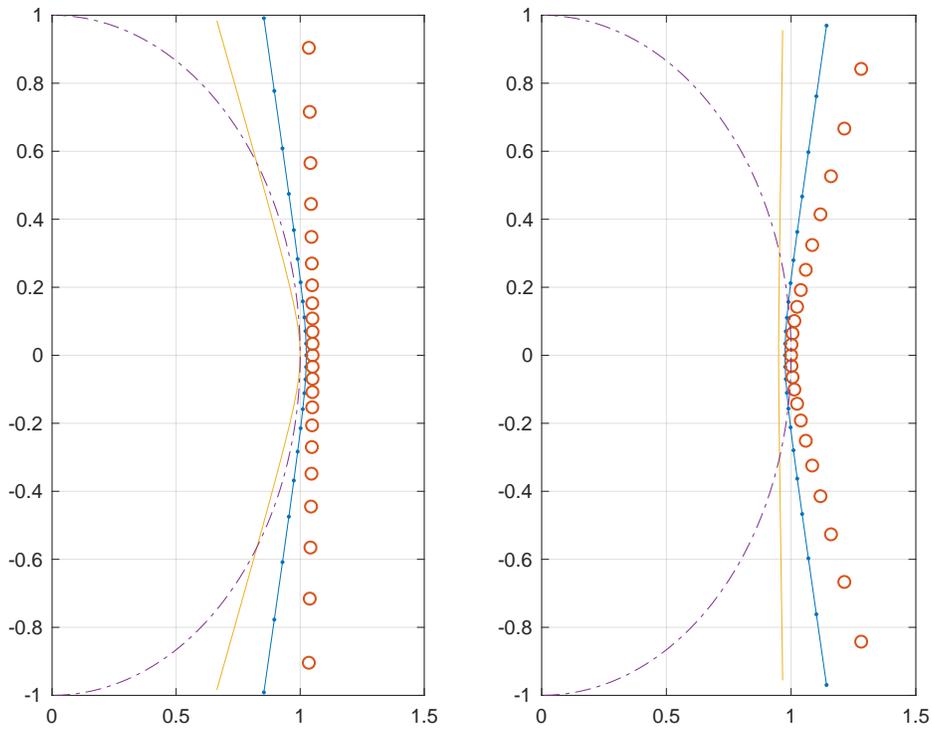}}
\caption{\small Cases I(i) (left panel) and II(ii) (right panel). Dots: the unit circle.
Dots-dashes, circles and solid lines: the curves $\chi_{L; \sg_\ell,b_\ell,\om_\ell}(i\om_\ell+\bR)$, $\chi_{L; \sg_\ell,b_\ell,\om_\ell}(i(\om_\ell+d_\ell)+\bR)$,
$\chi_{L; \sg_\ell,b_\ell,\om_\ell}(i(\om_\ell-d_\ell)+\bR)$.
}
\label{fig:Graph1}
\end{figure}

 \begin{figure}
\scalebox{0.75}
{\includegraphics{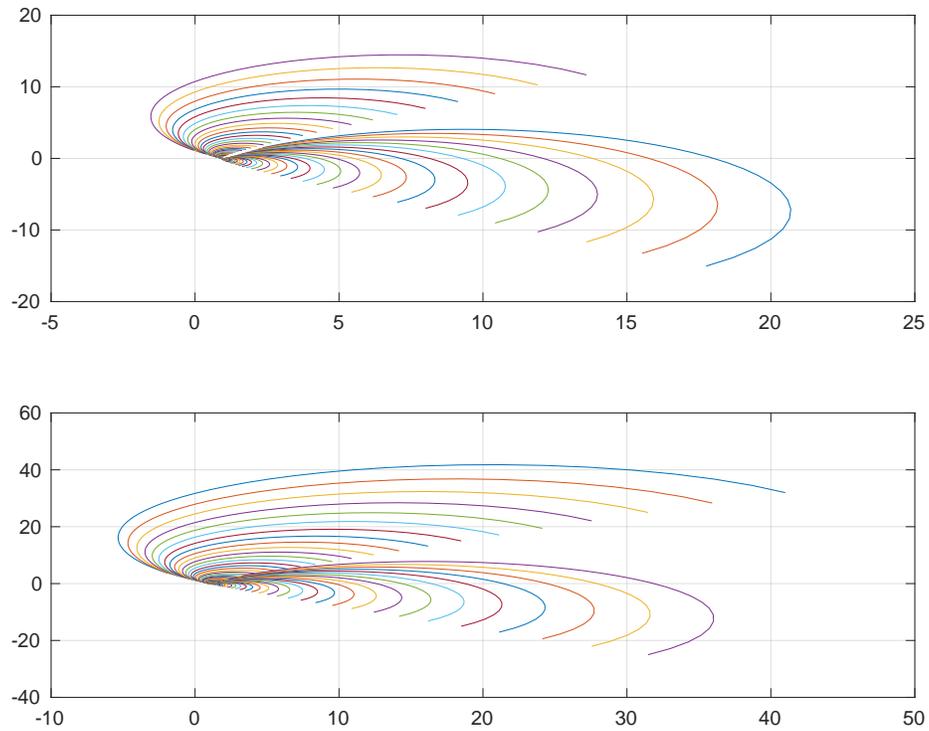}}
\caption{\small Plots of curves $\eta\mapsto (1-q)/(1-q\Phi(\eta))$, for $q$ in the SINH-$Z$- inversion and $\eta$ on the contours $\cL^\pm$  (upper and lower panels)
in the numerical example with $\nu=1.2$, and $T=15$.}
\label{WHF_disc_SINH_cond_nu1.2}
\end{figure}

 \begin{table}
\caption{\small Joint cpdf $F(T,a_1,a_2):=\bQ[X_T\le a_1, \barX_T\le a_2\ |\ X_0=\barX_0=0]$, and errors (rounded)
and CPU time (in msec) of two numerical schemes.  Discrete monitoring, the monitoring interval $\barDe=1/252$, $T=0.25Y$,
the number of time steps 63. KoBoL 
close to the Variance Gamma model, with an almost symmetric jump density, and no ``drift": $m_2=0.1$, $\nu=0.2, \lm=-2, \lp=1$. Errors are rounded, the CPU time is in milliseconds  (average over 1000 runs).
 }
 {\tiny
\begin{tabular}{c|c|cc|c|cc}
\hline\hline
$a_2/a_1$ & -0.075 & -0.05 & -0.025 & 0 & 0.025 \\\hline
0.025 & 0.052873910286366 & 0.0650091858382787 & 0.0879288341672031 & 0.506532201212114 & 0.923468308358369\\
0.05 & 0.0534088530783456 & 0.0656338924464693 & 0.0886847807216264 &0.507515090989102 & 0.925299214939269\\
0.075 & 0.0536456853005228 & 0.0659043877286091 & 0.0890004474115774 & 0.507896616129521 & 0.925793930891586\\
0.1 & 0.0537794257554031 & 0.0660548821001662 & 0.0891723010284717 & 0.508097111907463 & 0.926036138000489\\
0.175 & 0.0539628421387795 & 0.0662578446892915 & 0.0893989471374944 & 0.508353292242695 & 0.926330710592022
\\\hline
\end{tabular}

\begin{tabular}{c|ccccc|ccccc}
\hline\hline
&&& $A$ & & && & B & \\\hline
$a_2/a_1$ & -0.075 & -0.05 & -0.025 & 0 & 0.025 & -0.075 & -0.05 & -0.025 & 0 & 0.025
 \\\hline
0.025 &	4.03E-12	& 3.63E-12 &	2.61E-13 &	5.46E-12	& 1.88E-11 
& 4.41E-12 &	4.10E-12 &	3.4179E-13 &	-9.25E-13	& 1.38E-11\\

0.05	& 4.17E-12 &	3.81E-12	& 4.583E-13 &	5.80E-12 &	2.38E-12
& 4.57E-12 &	4.32E-12	& 6.20E-13 &	-5.57E-13 & 	-2.93E-12\\

0.075 & 4.09E-12 &	3.70E-12 &	3.26E-13 &	5.65E-12	& 3.14E-12
& 4.46E-12 &	4.18E-12	& 4.82E-13 &	-7.13E-13	& -3.14E-12
 \\
 
0.1 &	 3.89E-12	& 3.48E-12 &	5.87E-14 &	4.88E-12 &	1.78E-12
& 4.21E-12 &	3.91E-12	& 1.61E-13 &	-1.08E-12	& -3.56E-12
\\

0.175 & 4.03E-12 &	3.63E-12 &	2.31E-13	& 5.19E-12 &	1.10E-12
& 4.30E-12 &	4.0E-12 &	2.62E-13 &	-9.66E-13	& -3.44E-12
\end{tabular}
}
\begin{flushleft}{\tiny
Errors of the benchmark values: better than E-14, at some points,  E-15. CPU time per 1 point: 980, per 44 points: 6,672.\\
A:  Trapezoid rule, $M_0=99$, $N^\pm=124$. CPU time per 1 point: 30.9; per 44 points:
496.\\
B: SINH applied to the inverse $Z$-transform, with $M_0=16, N^\pm=124$. CPU time per 1 point 10.2, per 44 points: 73.5.}
\end{flushleft}

\label{table:cpdf0}
 \end{table}

  \begin{table}
\caption{\small Joint cpdf $F(T,a_1,a_2):=\bQ[X_T\le a_1, \barX_T\le a_2\ |\ X_0=\barX_0=0]$, in the continuous time model, and errors (rounded) of approximation by the discrete time model, with the time step $\barDe=1/252$. $T=0.25Y$. KoBoL 
close to the Variance Gamma model, with an almost symmetric jump density, and no ``drift": $m_2=0.1$, $\nu=0.2, \lm=-2, \lp=1$. Errors are rounded. }
 {\tiny
\begin{tabular}{c|c|cc|c|cc}
\hline\hline
$a_2/a_1$ & -0.075 & -0.05 & -0.025 & 0 & 0.025 \\\hline
0.025 & 0.0528532412024314& 0.0649856679446126 & 0.0879014169039586 & 0.506498701211731 & 0.923417160799492\\
0.05 & 0.0533971065051704 & 0.0656207900757623 & 0.0886699612390502 & 0.507497961893706 & 0.925278586629322\\
0.075 & 0.0536378889312988 & 0.0658957955144885 & 0.0889908892581356 & 0.507885843291178 & 0.925781540582068\\
0.1 & 0.0537738608706033 & 0.0660488001673687 & 0.0891656084917806 & 0.508089681056682 & 0.926027783268804\\
0.175 & 0.05396033997440315 & 0.0662551510091756 & 0.0893960371866518 & 0.508350135593746 & 0.926327268956837
\\\hline
\end{tabular}

\begin{tabular}{c|ccccc|ccccc}
\hline\hline
&&& $A$ & & && & B & \\\hline
$a_2/a_1$ & -0.075 & -0.05 & -0.025 & 0 & 0.025 & -0.075 & -0.05 & -0.025 & 0 & 0.025
 \\\hline
2.07E-05 &	2.35E-05 &	2.742E-05	 & 3.35E-05 &	5.11E-05 
&0.00039 &	0.00036 &	0.00031 &	6.61E-05 &	5.54E-05\\ 

0.05	& 1.17E-05 &	1.31E-05	& 1.48E-05 &	1.71E-05 &	2.06E-05
& 0.00022 &	0.00020 &	0.00017 & 	3.38E-05	& 2.23E-05\\

0.075 & 7.80E-06 &	8.59E-06 &	9.56E-06 &	1.08E-05 &	1.24E-05  &
0.00015 &	0.00013 &0.00011 &	2.12E-05	& 1.34E-05
\\
 
0.1 &	 5.56E-06	& 6.08E-06 &	6.693E-06 &	7.43E-06 &	8.36E-06 
& 0.00010&	9.21E-05 &	7.51E-05 &	1.46E-05 &	9.02E-06
\\

0.175 & 2.50E-06 &	2.69E-06 &	2.91E-06 &	3.16E-06 &	3.44E-06
& 4.64E-05 &	4.07E-05 &	3.263E-05 &	6.21E-06 &	3.72E-06
\end{tabular}
}
\begin{flushleft}{\tiny
Errors of the benchmark values in the continuous time model: better than E-14, at a number of points,  better than E-15. \\
A:  Errors of approximation of the continuous time model by the discrete time model, $\barDe=1/252$.\\
B: Relative rrors of approximation of the continuous time model by the discrete time model, $\barDe=1/252$.}
\end{flushleft}

\label{table:cpdf_disc_vs_cont0}
 \end{table}

\begin{table}
\caption{\small Joint cpdf $F(T,a_1,a_2):=\bQ[X_T\le a_1, \barX_T\le a_2\ |\ X_0=\barX_0=0]$, and errors (rounded)
and CPU time (in msec) of two numerical schemes. $T=5Y$. Discrete monitoring, the monitoring interval $\barDe=1/252$,
the number of time steps 1260. KoBoL 
close to the Variance Gamma model, with an almost symmetric jump density, and no ``drift": $m_2=0.1$, $\nu=0.2, \lm=-2, \lp=1$. Errors are rounded, the CPU time is in milliseconds  (average over 1000 runs).
 }
 {\tiny
\begin{tabular}{c|c|cc|c|cc}
\hline\hline
$a_2/a_1$ & -0.075 & -0.05 & -0.025 & 0 & 0.025 \\\hline
0.025 & 0.322715785176063 & 0.341705312612668 & 0.362654563514927 & 0.385503065295135 & 0.402853073943893\\
0.05 & 0.36823129960626 & 0.390755656346763 & 0.415922513339139 & 0.444104383367338 & 0.469731888892867\\
0.075 & 0.396209256972821 & 0.420842816392821 & 0.448475971976962 & 0.479643744322071 & 0.509135503443898\\
0.1 & 0.415752842072438 & 0.44180038793114 & 0.471059131572705 & 0.504139671693882 & 0.535967402412399\\

0.175 & 0.450253847495689 & 0.478623894580305 & 0.510496476100609 & 0.546559667768138 & 0.581857694138651
\\\hline
\end{tabular}

\begin{tabular}{c|ccccc|ccccc}
\hline\hline
&&& $A$ & & && & B & \\\hline
$a_2/a_1$ & -0.075 & -0.05 & -0.025 & 0 & 0.025 & -0.075 & -0.05 & -0.025 & 0 & 0.025
 \\\hline
0.025 &	5.26E-11 &	5.40E-11 &	5.56E-11 &	5.70E-11 &	6.97E-11

& 1.72E-11 &	1.86E-11 &	2.04E-11 &	2.58E-11 &	4.38E-11\\

0.05	& 5.19E-11 &	5.27E-11 &	5.35E-11 &	5.38E-11 &	5.45E-11

& 9.46E-12 &	1.02E-11 &	1.11E-11 &	1.54E-11 &	2.10E-11\\

0.075 & 5.44E-11 &	5.50E-11 &	5.55E-11 &	5.55E-11 & 5.56E-11
& 6.30E-12 &	6.77E-12 &	7.35E-12 &	1.12E-11 &	1.62E-11 \\
 
0.1 &	 5.74E-11 & 	5.78E-11 &	5.83E-11	& 5.80E-11 &	5.80E-11
& 4.80E-12 &	5.14E-12 &	5.58E-12 &	9.25E-12	& 1.40E-11
\\

0.175 & 6.63E-11 &	6.66E-11 &	6.69E-11 &	6.66E-11 &	6.63E-11

& 2.52E-12 &	2.68E-12 &	2.91E-12 &	6.35E-12 &	1.08E-11
\end{tabular}
}
\begin{flushleft}{\tiny
Errors of the benchmark values: better than E-14, at a number of points, better than E-15. CPU time per 1 point: 239, per 44 points: 2,019.\\
A:  Trapezoid rule, $M_0=2844$, $N^\pm=144$. CPU time per 1 point: 812.4; per 44 points:
9,211.\\
B: SINH applied to the inverse $Z$-transform, with $M_0=19, N^\pm=137$. CPU time per 1 point 15.7, per 44 points: 111.8.}
\end{flushleft}

\label{table:cpdf5}
 \end{table}
 
\begin{table}
\caption{\small Joint cpdf $F(T,a_1,a_2):=\bQ[X_T\le a_1, \barX_T\le a_2\ |\ X_0=\barX_0=0]$, in the continuous time model, and errors (rounded) of approximation by the discrete time model, with the time step $\barDe=1/252$. $T=5Y$. KoBoL 
close to the Variance Gamma model, with an almost symmetric jump density, and no ``drift": $m_2=0.1$, $\nu=0.2, \lm=-2, \lp=1$. Errors are rounded. }
 {\tiny
\begin{tabular}{c|c|cc|c|cc}
\hline\hline
$a_2/a_1$ & -0.075 & -0.05 & -0.025 & 0 & 0.025 \\\hline
0.025 & 0.322520199783594 & 0.341498771047289 & 0.362435915393477 & 0.385270783252495 & 0.402604045074505\\
0.05 & 0.368086705216705 & 0.390602983343428 & 0.415760934326279 & 0.443932843905084 & 0.469548901563007\\
0.075 & 0.396095198340728 & 0.420722495404712 & 0.448348786726241 & 0.479508955507981 & 0.50899215240333\\
0.1 & 0.415660121524435 & 0.441702681710183 & 0.470955990412814 &  0.50403055838718 & 0.535851652064689\\
0.175 & 0.45019937409531 & 0.47856666561315 & 0.510436280251142 & 0.546496264054607 & 0.581790803932632
\\\hline
\end{tabular}

\begin{tabular}{c|ccccc|ccccc}
\hline\hline
&&& $A$ & & && & B & \\\hline
$a_2/a_1$ & -0.075 & -0.05 & -0.025 & 0 & 0.025 & -0.075 & -0.05 & -0.025 & 0 & 0.025
 \\\hline
0.025 &0.00020 &	0.00021 &	0.00022 &	0.00023 & 	0.00025&
0.00061 &	0.00060 &	0.00060 &	0.00060 &	0.00062 \\ 

0.05	& 0.00015	& 0.00015 &	0.00016 &	0.00017 &	0.00018
& 0.00039&	0.00039 &	0.00039 &	0.00039 &	0.00039\\

0.075 & 0.00011 &	0.00012 &	0.00013 &	0.00013 &	0.00014
& 0.00029 &	0.00029 &	0.00028 &	0.00028 &	0.00028 \\
 
0.1 &	 9.27E-05 & 	9.77E-05 &	0.00010&	0.00011& 	0.00012& 
0.00022 &	0.00022 &	0.00022 &	0.00022 &	0.00022\\

0.175 & 5.45E-05	& 5.72E-05 &	6.02E-05 &	6.34E-05 &	6.69E-05

& 0.00012 &	0.00012 &	0.00012 &	0.00012 &	0.00011\end{tabular}
}
\begin{flushleft}{\tiny
Errors of the benchmark values in the continuous time model: better than E-15, with a couple of exceptions. \\
A:  Errors of approximation of the continuous time model by the discrete time model, $\barDe=1/252$.\\

B: Relative errors of approximation of the continuous time model by the discrete time model, $\barDe=1/252$.}
\end{flushleft}

\label{table:cpdf_disc_vs_cont5}
 \end{table}

\begin{table}
\caption{\small Joint cpdf $F(T,a_1,a_2):=\bQ[X_T\le a_1, \barX_T\le a_2\ |\ X_0=\barX_0=0]$, and errors (rounded)
and CPU time (in msec) of two numerical schemes. $T=15Y$. Discrete monitoring, the monitoring interval $\barDe=1/252$,
the number of time steps 3780. KoBoL 
close to the Variance Gamma model, with an almost symmetric jump density, and no ``drift": $m_2=0.1$, $\nu=0.2, \lm=-2, \lp=1$. Errors are rounded, the CPU time is in milliseconds  (average over 1000 runs).
 }
 {\tiny
\begin{tabular}{c|c|cc|c|cc}
\hline\hline
$a_2/a_1$ & -0.075 & -0.05 & -0.025 & 0 & 0.025 \\\hline
0.025 & 0.273003522656352 & 0.275060714621777 & 0.276863384237128 & 0.278361438403706 & 0.279413583881186\\
0.05 & 0.325601636899453 & 0.328403204232286 & 0.330932690547093 & 0.333148630324321 & 0.334989330744591\\
0.075 & 0.364467787376584 & 0.367935193185576 & 0.371127846709011 & 0.37400815325999 & 0.376529331567823\\
0.1 & 0.396164068347951 & 0.400244732717707 & 0.404054649236343 & 0.407558402431724 & 0.410715394955968\\
0.175 & 0.467032161225892 & 0.472690792930844 & 0.47810575026395 & 0.483245070441871 & 0.488075079451549
\\\hline
\end{tabular}

\begin{tabular}{c|ccccc|ccccc}
\hline\hline
&&& $A$ & & && & B & \\\hline
$a_2/a_1$ & -0.075 & -0.05 & -0.025 & 0 & 0.025 & -0.075 & -0.05 & -0.025 & 0 & 0.025
 \\\hline
0.025 &	1.40E-10 &	1.41E-10 &	1.41E-10	& 1.41E-10 &	1.45E-10 
& 4.50E-11 &	4.59E-11 &	4.67E-11 &	-9.51E-12	& 2.71E-11\\

0.05	& 1.56E-10 &	1.56E-10 &	1.56E-10	& 1.56E-10 &	1.56E-10
& 4.10E-11 &	4.18E-11 &	4.26E-11 &	-1.37E-11 &	-1.368E-11\\

0.075 & 1.70E-10 &	1.71E-10 &	1.719E-10	 & 1.71E-10 &	1.71E-10
& 4.05E-11 &	4.13E-11	& 4.219E-11 &	-1.42E-11 &	-1.41E-11
 \\
 
0.1 &	 1.84E-10 &	1.84E-10 &	1.84E-10 &	1.84E-10 &	1.84E-10
& 4.08E-11 &	4.16E-11 &	4.24E-11 &	-1.39E-11	& -1.38E-11
\\

0.175 & 2.18E-10 &	2.19E-10	& 2.19E-10 &	2.19E-10 &	2.18E-10
& 4.26E-11 &	4.34E-11 &	4.43E-11	& -1.120E-11 &	-1.19E-11
\end{tabular}
}
\begin{flushleft}{\tiny
Errors of the benchmark values: better than E-13, with a couple of exceptions. CPU time per 1 point: 548, per 44 points: 4,162.\\
A:  Trapezoid rule, $M_0=8538$, $N^\pm=172$. CPU time per 1 point: 2,494; per 44 points:
25,613.\\
NB: the general recommendation for the choice of $M_0$  (the error tolerance E-10) is decreased by  50\%.\\
B: SINH applied to the inverse $Z$-transform, with $M_0=65, N^\pm=144$. CPU time per 1 point 27.5, per 44 points: 319.9.}
\end{flushleft}

\label{table:cpdf2}
 \end{table}

\begin{table}
\caption{\small Joint cpdf $F(T,a_1,a_2):=\bQ[X_T\le a_1, \barX_T\le a_2\ |\ X_0=\barX_0=0]$, in the continuous time model, and errors (rounded) of approximation by the discrete time model, with the time step $\barDe=1/252$. $T=15Y$. KoBoL 
close to the Variance Gamma model, with an almost symmetric jump density, and no ``drift": $m_2=0.1$, $\nu=0.2, \lm=-2, \lp=1$. Errors are rounded. }
 {\tiny
\begin{tabular}{c|c|cc|c|cc}
\hline\hline
$a_2/a_1$ & -0.075 & -0.05 & -0.025 & 0 & 0.025 \\\hline
0.025 & 0.272804820564476 & 0.274859824870105 & 0.276660421125814 & 0.278156527723931 & 0.279206809800465\\
0.05 & 0.325434820584041 & 0.328234181455624 & 0.33076152710125 & 0.332975399214234 & 0.33481410584792\\
0.075 & 0.364320938273655 & 0.36778614523566 & 0.370976636656939 & 0.373854823172472 & 0.376373926428562\\

0.1 & 0.39603236911559 & 0.400110870613695 & 0.403918641220246 & 0.407420269437458 & 0.410575160782172\\

0.175 & 0.466932999301196 & 0.472589684334096 & 0.47800268074549 & 0.483140027197031 & 0.487968050938338
\\\hline
\end{tabular}

\begin{tabular}{c|ccccc|ccccc}
\hline\hline
&&& $A$ & & && & B & \\\hline
$a_2/a_1$ & -0.075 & -0.05 & -0.025 & 0 & 0.025 & -0.075 & -0.05 & -0.025 & 0 & 0.025
 \\\hline
0.025 &	0.00020 &	0.00020 &	0.00020 &	0.00021 &	0.00021 &
0.00073 &	0.00073 &	0.00073 &	0.00074	& 0.00074 \\ 

0.05	& 0.00017 &	0.00017 & 	0.00017 &	0.00017 &	0.00017 
& 0.00051 &	0.00052 & 	0.00052 &	0.00052 &	0.00052\\

0.075 & 0.00015 &	0.00015 &	0.00015 &	0.00015 &	0.00016
& 0.00040 &	0.00040 &	0.00041 &	0.00041 &	0.00041 \\
 
0.1 &	 0.00013 &	0.00013 &	0.00014 &	0.00014 &	0.00014
& 0.00033 &	0.00033 &	0.00034 & 	0.00034 &	0.00034\\

0.175 & 9.92E-05 &	0.00010 &	0.00010 &	0.00010 &	0.00011

& 0.00021&	0.00021 &	0.00022 &	0.00022 &	0.00022
\end{tabular}
}
\begin{flushleft}{\tiny
Errors of the benchmark values in the continuous time model: better than E-13, with a couple of exceptions. \\
A:  Errors of approximation of the continuous time model by the discrete time model, $\barDe=1/252$.\\

B: Relative errors of approximation of the continuous time model by the discrete time model, $\barDe=1/252$.}
\end{flushleft}

\label{table:cpdf_disc_vs_cont2}
 \end{table}

\begin{table}
\caption{\small Joint cpdf $F(T,a_1,a_2):=\bQ[X_T\le a_1, \barX_T\le a_2\ |\ X_0=\barX_0=0]$, and errors (rounded)
and CPU time (in msec) of two numerical schemes. $T=15Y$. Discrete monitoring, the monitoring interval $\barDe=1/252$,
the number of time steps 3780. KoBoL 
close to NIG, with an almost symmetric jump density, and no ``drift": $m_2=0.1$, $\nu=1.2, \lm=-2, \lp=1$. Errors are rounded, the CPU time is in milliseconds  (average over 1000 runs).
 }
 {\tiny
\begin{tabular}{c|c|cc|c|cc}
\hline\hline
$a_2/a_1$ & -0.075 & -0.05 & -0.025 & 0 & 0.025 \\\hline
0.025 & 0.08750889022257 & 0.0876433202115771 & 0.0877488959582886 & 0.0878234438630917 & 0.0878604203790796\\
0.05 & 0.133430426595114 & 0.133678790617469 & 0.133884632610215 & 0.134046292415956 & 0.13416044157208\\
0.075 & 0.172212077596399 & 0.172587459419214 & 0.172909022548126  & 0.173175531405955 & 0.173384836452991\\
0.1 & 0.206872444504732 & 0.207388307897551 & 0.207840301897249 & 0.208227492747064 & 0.208548393051015\\
0.175 & 0.295589651996839 & 0.296599359388506 & 0.297519605986825 & 0.298349844042413 & 0.299089406243993
\\\hline
\end{tabular}

\begin{tabular}{c|ccccc|ccccc}
\hline\hline
&&& $A$ & & && & B & \\\hline
$a_2/a_1$ & -0.075 & -0.05 & -0.025 & 0 & 0.025 & -0.075 & -0.05 & -0.025 & 0 & 0.025
 \\\hline
0.025 &	-3.95E-10	& -4.66E-10 &	-5.44E-10 &	1.06E-09	& 4.637E-10

& 4.68E-11 &	4.69E-11 &	4.16E-11 &	-6.00E-11 &	-5.90E-11\\

0.05	& -5.48E-10 &	-6.63E-10 &	-7.95E-10 & 	7.44E-10 &	7.73E-10

& 5.70E-11 &	5.94E-11 &	5.57E-11 &	-4.62E-11	& -8.14E-11\\

0.075 & -6.54E-10 &	-8.08E-10 & 	-9.90E-10 & 	4.86E-10 &	4.33E-10

& 6.063E-11 &	6.49E-11 &	6.33E-11 &	-3.66E-11	& -7.06E-11
 \\
 
0.1 &	 1.84E-10 &	1.84E-10 &	1.84E-10 &	1.84E-10 &	1.84E-10

&6.09E-11 &	6.62E-11 &	6.61E-11 &	-3.21E-11 & 	-6.42E-11
\\

0.175 & -6.65E-10 &	-8.321E-10 &	-1.03E-09	& 4.21E-10 &	3.37E-10

& 5.80E-11 &	6.34E-11 &	6.38E-11 &	-3.32E-11 & 	-6.34E-11
\end{tabular}
}
\begin{flushleft}{\tiny
Errors of the benchmark values: better than $5\cdot 10^{-13}$. CPU time per 1 point: 1,848, per 44 points: 20,263.\\
A:  Trapezoid rule, $M_0=8538$, $N^\pm=172$. CPU time per 1 point: 3,046; per 44 points:
35,481.\\
NB: the general recommendation for the choice of $M_0$  (the error tolerance E-10) is decreased by  47\%.\\
B: SINH applied to the inverse $Z$-transform, with $M_0=28, N^\pm=183$. CPU time per 1 point 27.5, per 44 points: 219.9.}
\end{flushleft}

\label{table:cpdf3}
 \end{table}
 
 \begin{table}
\caption{\small Joint cpdf $F(T,a_1,a_2):=\bQ[X_T\le a_1, \barX_T\le a_2\ |\ X_0=\barX_0=0]$, in the continuous time model, and errors (rounded) of approximation by the discrete time model, with the time step $\barDe=1/252$. $T=15Y$. KoBoL 
close to NIG, with an almost symmetric jump density, and no ``drift": $m_2=0.1$, $\nu=1.2, \lm=-2, \lp=1$. Errors are rounded. }
 {\tiny
\begin{tabular}{c|c|cc|c|cc}
\hline\hline
$a_2/a_1$ & -0.075 & -0.05 & -0.025 & 0 & 0.025 \\\hline

0.025 &  0.083599231183863 & 0.083725522194071 & 0.0838241629685378 & 0.0838929695457668 & 0.0839249287233805\\
0.05 & 0.130217710987261 & 0.130456839782095 & 0.130654399607263 &  0.130808705570046 & 0.13091634106018\\
0.075 & 0.169363038877019 & 0.169728032397852 & 0.170040043384744 & 0.170297815998657 & 0.170499151715123\\

0.1 & 0.204270598983103 & 0.204774983963964 & 0.205216260776888 & 0.205593481299844 & 0.205905127535884\\

0.175 & 0.293472724302235 & 0.294468206269081 & 0.295374834640356 & 0.296192060853885 & 0.296919211526691
\\\hline
\end{tabular}

\begin{tabular}{c|ccccc|ccccc}
\hline\hline
&&& $A$ & & && & B & \\\hline
$a_2/a_1$ & -0.075 & -0.05 & -0.025 & 0 & 0.025 & -0.075 & -0.05 & -0.025 & 0 & 0.025
 \\\hline
0.025 &	0.0039 &	0.0039 &	0.0039 &	0.0039 &	0.0039 &

0.047 &	0.047 &	0.047 &	0.047 &	0.047 \\ 

0.05	& 0.0032&	0.0032 &	0.0032 &	0.0032 &	0.0032 

& 0.025 &	0.025 &	0.025 &	0.025 &	0.025\\

0.075 & 0.0028 &	0.0029 &	0.0029 &	0.0029 &	0.0029

& 0.017 &	 0.017 &	 0.017 &	 0.017 &	 0.017 \\
 
0.1 &	  0.013 &	0.013 &	0.013 &	0.013&	0.013

& 0.00033 &	0.00033 &	0.00034 & 	0.00034 &	0.00034\\

0.175 & 0.0021 &	0.0021 &	0.0021 &	0.0022 &	0.0022

& 0.0072 &	0.00721 &	0.0073 &	0.0073 &	0.0073
\end{tabular}
}
\begin{flushleft}{\tiny
Errors of the benchmark values in the continuous time model: better than E-13, with a couple of exceptions. \\
A:  Errors of approximation of the continuous time model by the discrete time model, $\barDe=1/252$.\\

B: Relative errors of approximation of the continuous time model by the discrete time model, $\barDe=1/252$.}
\end{flushleft}

\label{table:cpdf_disc_vs_cont3}
 \end{table}

 \end{document}